\documentclass{article}
\usepackage{graphicx} 
\usepackage{amssymb,amsthm}
\usepackage{amsmath,amscd}

\usepackage{enumitem,kantlipsum}
\usepackage{cite}
\usepackage{xcolor, soul}
\usepackage{hyperref}
\usepackage{mfl}
\usepackage{orcidlink}

\title{Superabelian logics}
\author{Petr Cintula\footnote{Institute of Computer Science, Czech Academy of Sciences,
Pod Vodárenskou věží 271/2
182 00 Prague 8
Czech Republic.} \orcidlink{0000-0002-3617-1392}
\and
Filip Jankovec\footnotemark[1]
\footnote{Department of Algebra, Faculty of Mathematics and Physics, Charles University,
Sokolovská 49/83, 
186 75 Prague 8
Czech Republic.} \orcidlink{0009-0002-2746-0042}
\and
Carles Noguera \footnote{Department of Information Engineering and Mathematics,  University of Siena, 
         San Niccol\`o, via Roma 56, 53100 Siena, Italy.} \orcidlink{0000-0003-4910-599X}}

\newtheorem{thm}{Theorem}[section]
\newtheorem{lemma}[thm]{Lemma}
\newtheorem{prop}[thm]{Proposition}
\newtheorem*{obs}{Observation}
\newtheorem{con}[thm]{Conjecture}
\newtheorem{defn}[thm]{Definition}
\newtheorem{cor}[thm]{Corollary}
\newtheorem{example}[thm]{Example}

\newenvironment{proofmiddle}{
  
  \begin{proof}[Proof]
}{\end{proof}}

\newcommand{\oa}{\alg{1}} 
\newcommand{\A}{\alg{A}}  
\newcommand{\B}{\alg{B}} 

\newcommand{\pAb}{{\ensuremath{\logic{{pAb}}}}}
\newcommand{\rAb}{{\ensuremath{\logic{{rAb}}}}}
\newcommand{\Ab}{{\ensuremath{\logic{{Ab}}}}}
\newcommand{\Lu}{\ensuremath{\logic{{Lu}}}}

\newcommand{\tr}{\ensuremath{\mathtt{t}}}
\newcommand{\fa}{\ensuremath{\mathtt{f}}}

\renewcommand{\Alg}{\ensuremath{\mathbf{Alg}}}

\renewcommand{\R}{\ensuremath{\alg{R}}}
\renewcommand{\Q}{\ensuremath{\alg{Q}}}
\renewcommand{\Z}{\ensuremath{\alg{Z}}}

\newcommand{\f}{\ensuremath{\varphi}}
\newcommand{\p}{\ensuremath{\psi}}
\newcommand{\x}{\ensuremath{\chi}}

\date{ }

\hyphenation{a-na-lo-gous}
\hyphenation{re-la-ti-vi-ty}

\begin{document}

\maketitle

\begin{abstract}
This paper presents a unified algebraic study of a family of logics related to Abelian logic ($\Ab$), the logic of Abelian lattice-ordered groups. We treat $\Ab$ as the base system and refer to its expansions as \emph {superabelian logics}. The paper focuses on two main families of expansions. First, we investigate the rich landscape of infinitary extensions of $\Ab$, providing an axiomatization for the infinitary logic of real numbers and showing that there exist $2^{2^\omega}$ distinct logics in this family. Second, we introduce \emph{pointed Abelian logic} ($\pAb$), the logic of pointed Abelian lattice-ordered groups, by adding a new constant to the language. This framework includes \emph{\L ukasiewicz unbound logic}. We provide axiomatizations for its finitary and infinitary versions as extensions of $\pAb$ and establish their precise relationship with standard Łukasiewicz logic via a formal translation. Finally, the methods developed for this analysis are generalized to axiomatize the logics of other prominent pointed groups.
\end{abstract}

\noindent{\small{\bf Keywords:} Abelian logic, \L ukasiewicz logic, Algebraic logic, Infinitary extensions, Completeness theorems, Abelian lattice-ordered groups}

\section{Introduction}

Robert Meyer and John Slaney, at a meeting of the Australasian Association for Logic of 1979,\footnote{Their work was only published ten years later in~\cite{Meyer-Slaney:AbelianLogic}, the same year in which Ettore Casari also published an independent discovery of the same logic motivated by the study of comparative adjectives in natural language~\cite{Casari:ComparativeLogics}.} presented a new logic system, {\em Abelian logic}, as a particular relevant logic in a language consisting of binary connectives $\imp$, $\conj$, $\lor$, and $\land$ and the truth constant $\tr$.\footnote{Note that the original paper uses two truth constants $\tr$ and $\fa$ which, however, were assumed to coincide. Later we give convincing reasons for choosing otherwise here.} Special features of Abelian logic are (1) the {\em axiom of relativity} $((\f\to\p) \to \p)\to \f$, a generalization of the double negation elimination law in which any formula $\p$ can take the place of falsity (2) its equivalent algebraic semantics, the variety of Abelian lattice-ordered groups (Abelian $\ell$-groups for short; see the next section for details). 

While one needs to work with all Abelian $\ell$-groups to get a sound and complete semantics for the full consequence relation of Abelian logic, if we only want to speak about its tautologies (or even the consequence relation restricted to finite sets of premises), it suffices to restrict to any given non-trivial Abelian $\ell$-group, in particular the usual additive group of the real numbers.

The tradition of endowing logical systems with an algebraic semantics based on algebras over real numbers is much older. Indeed, {\L}ukasiewicz logic (which can be defined in the language of Abelian logic expanded by a truth constant $\fa$ for falsum) in its infinitely-valued version was introduced in 1930 by {\L}ukasiewicz and Tarski~\cite{Lukasiewicz-Tarski:Untersuchungen} and since then it has proved to be one of the most prominent members of the family of many-valued logics often used to model some aspects of vagueness. Also, it has deep connections with other areas of mathematics such as continuous model theory, error-correcting codes, geometry, algebraic probability theory, etc.~\cite{Cignoli-Ottaviano-Mundici:AlgebraicFoundations,Gabbay-Metcalfe:ContinuousUninorms,DiNola-Leustean:Handbook, Mundici-LogicUlamGame}. 

Let us now introduce the real-valued semantics 
of these two logics side-by-side, to see their close relationship.

\begin{center}
\begin{tabular}{l@{\qquad}l@{\qquad}l}
\mbox{} & Abelian logic & \L ukasiewicz logic \\\hline
truth-values & $\mathbb{R}$ & $[0,1]$ \\
truth-definition  & $e(\f) \geq 0$ & $e(\f) = 1$  \\
$e(\f\to\p)$   & $e(\p) - e(\f)$ & $\min\{1, 1+ e(\p)- e(\f) \}$  \\
$e(\f\conj\p)$   & $e(\f) + e(\p)$ & $\max\{0, e(\f) + e(\p) -1 \}$   \\
$e(\f\wedge\p)$   & $\min\{e(\f) , e(\p) \}$ & $\min\{e(\f) , e(\p) \}$   \\
$e(\f\vee\p)$   & $\max\{e(\f) , e(\p) \}$ & $\max\{e(\f) , e(\p) \}$   \\
$e(\tr)$  & $0$ & $1$  \\
$e(\fa)$   & \mbox{-} & $0$    
\end{tabular}
\end{center}

We say that a formula $\f$ is a tautology of any of these two logics, if the truth definition holds for all evaluations (we get to consequence relations later). The tautologies of \L ukasiewicz logic form a proper subset of the classical tautologies (for example, the evaluation $e(p) = 0.5$ shows that the classical tautology $p \lor (p \to \fa)$ is not a tautology of \L ukasiewicz logic). In contrast, Abelian logic is contraclassical because, for instance, the formula $((p\to q) \to q)\to p$ is a tautology (while in classical logic we can take the evaluation $e$ such that $e(p)=0$ and $e(q)=1$, which makes it false). Observe also that Abelian logic is consistent: for instance, the evaluation $e(p) = e(q) = 1$ shows that $q\to(p \to q)$ is not a tautology.

Recently, a variation of \L ukasiewicz logic, that we will call here \emph{\L ukasiewicz unbound logic}, has been introduced in~\cite{Cintula-Grimau-Noguera-Smith:DegreesTo11}, with philosophical and linguistic motivations related to a finer analysis of reasoning with vagueness and graded predicates. This logic shares the language with \L ukasiewicz logic and so we can again present their semantics side-by-side for comparison (note that the semantics of the unbound logic uses all real numbers as truth degrees and hence it allows us to avoid  ``truncations'' in the definition of evaluations):
\begin{center}
\begin{tabular}{l@{\qquad}l@{\qquad}l}
\mbox{} & \L ukasiewicz unbound logic & \L ukasiewicz logic \\
truth-values & $\mathbb{R}$ & $[0,1]$ \\
truth-definition  & $e(\f) \geq 1$ & $e(\f) = 1$  \\
$e(\f\to\p)$   & $1+ e(\p) - e(\f)$ & $\min\{1, 1+ e(\p)- e(\f) \}$  \\
$e(\f\conj\p)$   & $e(\f) + e(\p) - 1 $ & $\max\{0, e(\f) + e(\p) -1 \}$   \\
$e(\f\wedge\p)$   & $\min\{e(\f) , e(\p) \}$ & $\min\{e(\f) , e(\p) \}$   \\
$e(\f\vee\p)$   & $\max\{e(\f) , e(\p) \}$ & $\max\{e(\f) , e(\p) \}$   \\
$e(\tr)$  & $1$ & $1$  \\
$e(\fa)$   & $0$ & $0$    
\end{tabular}
\end{center}

The first starting point of this paper is the observation that \L ukasiewicz unbound logic can be seen as an expansion of Abelian logic (in the language expanded with $\fa$): indeed we only need to use the isomorphism $f(x) = x - 1$ to obtain a ``shifted'' version of the semantics of \L ukasiewicz unbound logic:
\begin{center}
\begin{tabular}{l@{\qquad}l@{\qquad}l}
\mbox{} & Abelian logic & \L ukasiewicz unbound logic, shifted \\
truth-values & $\mathbb{R}$ & $\mathbb{R}$ \\
truth-definition  & $e(\f) \geq 0$ & $e(\f) \geq 0$  \\
$e(\f\to\p)$   & $e(\p) - e(\f)$ & $e(\p)- e(\f)$  \\
$e(\f\conj\p)$   & $e(\f) + e(\p)$ & $ e(\f) + e(\p) $   \\
$e(\f\wedge\p)$   & $\min\{e(\f) , e(\p) \}$ & $\min\{e(\f) , e(\p) \}$   \\
$e(\f\vee\p)$   & $\max\{e(\f) , e(\p) \}$ & $\max\{e(\f) , e(\p) \}$   \\
$e(\tr)$  & $0$ & $0$  \\
$e(\fa)$   & \mbox{-} & $-1$    
\end{tabular}
\end{center}
It is trivial to see that tautologies given by the original and the ``shifted'' semantics coincide and thus any tautology of Abelian logic is also a tautology of \L ukasiewicz unbound logic (its relation to \L ukasiewicz logic is more complex and is explored in Theorem~\ref{t:tau}).\footnote{\label{AbelLang}Note that this is the reason why we do not allow $\fa$ to be present in the language of Abelian logic and coincide with $\tr$ as this would make $\tr\to\fa$ one of its tautologies which is clearly not a tautology of \L ukasiewicz unbound logic.} This observation induces a rich family of logics \emph{expanding} Abelian logic in the language with just one additional truth constant.

The second starting point of the paper is the observation that, despite the fact that Abelian logic has no {\em finitary} consistent extensions in the original language, it has a vast family of \emph{infinitary} extensions.

Thus, the main goal of this article is twofold: (1) clarify the exact relations between Abelian, \L ukasiewicz and \L ukasiewicz unbound logics and (2) study two prominent families of expansions of Abelian logic: infinitary extensions in the original language and expansions in the language with $\fa$.

The paper is organized as follows. After this introduction, in Section~\ref{s:Prelim} we formally introduce Abelian logic \Ab\ and the theoretical framework to study its expansions. We choose the framework of (abstract) algebraic logic (see e.g.~\cite{Font-Jansana-Pigozzi:SurveyAAL,Font:AALBook}) to develop our study. More precisely, since $\Ab$ is a weakly implicative logic in the sense of~\cite{Cintula:WIFL-BasicProperties}, we capitalize mostly on the kind of abstract algebraic logic presented in the monograph~\cite{Cintula-Noguera:TheBook},  narrowing down its results and reformulating them in a convenient way for our purposes.
We also need to employ some tools of universal algebra which can be found e.g.\ in the monographs~\cite{Birkhoff:LatticeTheory,Burris-Sankappanavar:CourseUniversalAlgebra,Galatos-JKO:ResiduatedLattices,Gobrunov:Quasivarieties,Maltsev:Metamathematics}. In particular, in this approach, we have that axiomatic extensions correspond to subvarieties of algebras, finitary extensions correspond to subquasivarieties, and infinitary extensions to generalized subquasivarieties. Since, as mentioned above, every non-trivial Abelian $\ell$-group generates the whole class as a quasivariety, it turns out that there are no non-trivial subquasivarieties, and hence the only finitary extension of \Ab\ is the inconsistent logic (i.e.\ the logic in which all formulas are tautologies). While this section is thus mostly preliminary, it also presents a new, purely syntactic proof of the semilinearity of Abelian logic in Theorem~\ref{t:semilinearity of Ab}, which, to the best of our knowledge, has not appeared in this form before.

In contrast, the study of its infinitary extensions is much richer, for which in Section~\ref{s:InfAbel} we just scratch the surface. In particular, we give an infinitary rule that axiomatizes the extension of \Ab\ corresponding to the generalized quasivariety generated by \R. 
 Moreover, we provide $2^{\omega}$ semilinear infinitary extensions of $\Ab$, with corresponding completeness theorems, showing ${\vDash_{\R}} \subsetneq {\vDash_{\Q}}$ and that there are $2^{2^\omega}$ infinitary extensions of \Ab. At the end of the section, we discuss a possible axiomatization of the logic generated by \Z.
 
After that, in Section~\ref{s:PointedAbel}, we introduce the pointed Abelian logic \pAb, which gives rise to a dramatically different landscape, since here sub(quasi)varieties of algebras are plentiful. In particular, we prove that \pAb\ is finitely strongly complete with respect to $\{\R_{-1},\R_0,\R_1\}$. 

In Section~\ref{s:LukUnbound} we study the \L ukasiewicz unbound logics \Lu\ and $\Lu_\infty$, we axiomatize them as extensions of \pAb, and we show there is a translation of \L ukasiewicz logic into these logics. Following this, we generalize the methods to axiomatize and provide completeness results for a family of other prominent extensions of \pAb, which are similar to \Lu. We conclude the paper with Section~\ref{s:Conclusions} discussing possible avenues of further research.

\section{Abelian logic and its expansions}\label{s:Prelim}

Let $\Fm{L}$ be the set of all formulas built from a propositional language $\lang{L}$ and a countable set of variables, and let $\alg{\Fm{L}}$ be the absolutely free algebra of type $\lang{L}$ defined on $\Fm{L}$.

\begin{defn}
We say that a relation $\logic{L}$ between sets of formulas and formulas in a language $\lang{L}$ is a {\em logic} in ${\lang{L}}$ when it satisfies the following conditions for each $\Gamma\cup\Delta\cup\{\f\}\subseteq\Fm{L}$:
\begin{flushleft}
\begin{itemize}
\item $\{\f\}\vdash_\logic{L}\f$. \hfill (Reflexivity)\index{reflexivity}
\item If\/ $\Gamma\vdash_\logic{L}\f$ and $\Gamma \subseteq \Delta$, then $\Delta\vdash_\logic{L}\f$. \hfill (Monotonicity)
\item If\/ $\Delta\vdash_\logic{L}\p$ for each $\p\in \Gamma$ and\/ $\Gamma\vdash_\logic{L}\f$, then $\Delta\vdash_\logic{L}\f$. \hfill (Cut)\index{cut}
\item If\/ $\Gamma\vdash_\logic{L}\f$, then $\sigma[\Gamma]\vdash_\logic{L}\sigma(\f)$ for each substitution $\sigma$, i.e.\ endomorphisms on $\alg{\Fm{L}}$. \hfill (Structurality)
\end{itemize}
\end{flushleft}
A logic $\logic{L}$ is said to be \emph{finitary} if furthermore
\begin{flushleft}
\begin{itemize}
\item If\/ $\Gamma\vdash_\logic{L}\f$, then there is a \emph{finite} $\Gamma'\subseteq\Gamma$ such that $\Gamma'\vdash_\logic{L}\f$. \hfill (Finitarity)
\end{itemize}
\end{flushleft}
As a matter of convention we say that $\logic{L}$ is {\em infinitary} if it is not finitary.
\end{defn}

A \emph{consecution} in a language $\lang{L}$ is a tuple written as $\Gamma \wdash \f$, where $\Gamma \cup \{\f\} \subseteq \Fm{L}$; it is said to be \emph{finitary} if $\Gamma$ is finite and \emph{infinitary} otherwise, we also identify consecutions $\emptyset \wdash \f$ with just the formula $\f$. Note that any logic can be seen as a set of consecutions and thus whenever $\Gamma \vdash_\logic{L} \f$, we say that the consecution $\Gamma\wdash\f$ is \emph{valid} in $\logic{L}$ or that $\logic{L}$  \emph{satisfies} $\Gamma\wdash\f$.

A set of consecutions closed under arbitrary substitutions (i.e.\ endomorphisms on $\alg{\Fm{L}}$, which are extended to consecutions in an obvious way) is called an \emph{axiomatic system}. An element $\Gamma \wdash \f$ of the set is called an \emph{axiom}, a \emph{finitary rule} or an \emph{infinitary rule} depending on the cardinality of $\Gamma$. An axiomatic system is called \emph{infinitary} if it contains at least one infinitary rule, otherwise it is called \emph{finitary}.

Given an axiomatic system $\AS$ we write $\Gamma\vdash_\AS \f$ if there is a proof of $\f$ from $\Gamma$ in $\AS$. Note that, as we allow infinitary rules, our notion of proof cannot be the usual finite sequence of formulas, but it has to be a tree with no infinite branches (for details see~\cite{Cintula-Noguera:TheBook}). We say that $\AS$ is a presentation (an axiomatic system) of logic $\logic{L}$ if $\logic{L} = {\vdash_\AS}$. Consequently, we can trivially observe that every (finitary) logic possesses a (finitary) presentation.

In this paper we deal with several logics in different languages with rather intricate interrelations. Therefore, we need to introduce the formal notions of extension and expansion.

\begin{defn}\label{d:expansions}
Let $\lang{L_1} \subseteq \lang{L_2}$ be two languages and\/ $\logic{L_i}$ a logic in $\lang{L_i}$ for $i \in \{1,2\}$. We say that $\logic{L_2}$ is
\begin{itemize}\addtolength\itemsep{2pt}
\item the {\em expansion of\/ $\logic{L_1}$ by a set of consecutions $\Con{S}$} in $\lang{L}_2$ if it is the least logic in $\lang{L_2}$ containing $\logic{L_1}$ and $\Con{S}$ (i.e.\
$\logic{L_2}$ is axiomatized by all substitutional instances in $\lang{L_2}$ of consecutions from $\Con{S}\cup\AS$, for any presentation $\AS$ of\/ $\logic{L}_1$). We denote $\logic{L_2}$ by $\logic{L_1} + \Con{S}$.

\item an (axiomatic) {\em expansion of\/ $\logic{L_1}$} if it is the expansion of\/ $\logic{L_1}$ by some set of {\em consecutions} (resp.\ axioms).

\item an {\em extension of\/ $\logic{L_1}$} if\/ $\logic{L_2}$ is an expansion of\/ $\logic{L_1}$ and $\lang{L_1}=\lang{L_2}$.
\end{itemize}
\end{defn}

Now we can formally define the \emph{Abelian logic} $\Ab$ as the logic in the language\footnote{The literature has equivalent presentations of the logic in other languages, with interdefinable sets of connectives. For example, one can have $\imp$ as a primitive connective and use it to define: the constant $\tr$ (as $\tr:=p \imp p$), a negation $\neg$ (as $\neg \f:= \f \imp \tr$), and the conjunction $\conj$ (as $\f \conj \p := \neg(\f \imp \neg \p) $); conversely, from $\neg$ and $\conj$, one can retrieve implication as $\f \imp \p := \neg(\f\conj \neg \p) $.} $\lang{L}_\Ab=  {\{\imp, \conj, \lor, \land, \tr\}}$ given by the  axiomatic system presented in Table~\ref{tab:Abel}.

\begin{table}
    \begin{formlist}
\rul{sf}	& $(\f \imp \p) \imp ((\p \imp \xi) \imp (\f \imp \xi))$ & suffixing\\
\rul{e}	& $(\f \imp (\p \imp \xi)) \imp (\p \imp(\f \imp \xi))$ & exchange\\
\rul{id}	& $\f \imp \f$ 			& identity\\

$\rul{rel}$	& $((\f \imp \p) \imp \p)  \imp \f$ 				& relativity axiom \\

\rul{ub_1}	& $\f \imp \f \lor \p$ 			& upper bound\\

\rul{ub_2}	& $\p \imp \f \lor \p$ 			& upper bound\\

\rul{lb_1} & $\f \land \p \imp \f$ 			& lower bound\\

\rul{lb_2} & $\f \land \p \imp \p$ & lower bound\\

\rul{sup} & $(\f \imp \x) \land (\p \imp \x) \imp (\f \lor \p \imp \x)$ & supremality\\

\rul{inf} & $(\x \imp \f) \land (\x \imp \p)  \imp (\x \imp \f \land \p) $ & infimality\\

\rul{push} & $\f \imp (\tr \imp \f) $ & push\\

\rul{pop} & $(\tr \imp \f)  \imp \f$ & pop\\

\rul{res_1}	& $(\f \imp (\p \imp \xi)) \imp (\f \conj \p \imp \xi)$ 			& residuation\\

\rul{res_2}	& $(\f \conj \p \imp \xi) \imp (\f \imp (\p \imp \xi)) $ 			& residuation\\

\MP	& $\f ,\f \imp \p \wdash \p$ & \emph{modus ponens} \\

\rul{Adj} & $\f ,\p \wdash \f \land \p$ & adjunction
\end{formlist}
\caption{The axiomatic system of Abelian logic}\label{tab:Abel}
\end{table}

\pagebreak

The following consecutions are known to be valid in $\Ab$ and thus they are valid in all its expansions:

\begin{formlist}
\rul{\tr} & $\tr$ & \\
\Tr & $\f \imp \p, \p \imp \xi \wdash \f \imp \xi$   & transitivity \\[2pt]
\rul{C_\lor} & $\f \lor \p \wdash \p \lor \f$ & $\lor$-commutativity\\[2pt]
\rul{A_\lor} & $(\f \lor \p) \lor \xi \wdash \f \lor (\p \lor \xi)$  & associativity\\[2pt]
\rul{I_\lor} & $\f \lor \f \wdash \f$ & $\lor$-idempotency\\[2pt]
\rul{p_\vee} & $(\f \imp \p) \lor (\p \imp \f)$ & prelinearity\\
    \rul{MP_\vee} & $\f \lor \p,\f \imp \p \wdash \p$  \\
\end{formlist}

Moreover, one can easily see that $\Ab$ is a weakly implicative logic in the sense of~\cite{Cintula-Noguera:TheBook}, i.e., it satisfies \rul{id}, \MP, \Tr\ and, for each $n$-ary $c$ in $\lang{L}_\Ab$, also  
\begin{formlistB}
\rul{sCNG_{\mathit c}} 	& $\f \imp \p, \p\imp \f \vdash_\Ab c(\xi_1,\dots,\xi_{i}, \f,\dots,\xi_n)\imp c(\xi_1,\dots,\xi_{i}, \p,\dots,\xi_n)$\\ 
		& \hfill for each $0 \leq i < n$.
\end{formlistB}

Expansions of $\Ab$ need not in general satisfy \rul{sCNG_{\mathit c}} for connectives added to the language $\lang{L}_\Ab$. Since we want to focus on weakly implicative logics, we provide the following definition.

\begin{defn}
A logic $\logic{L}$ in a language $\lang{L} \supseteq \lang{L}_\Ab$ is \emph{superabelian} if\/ $\logic{L}$ is an expansion of $\Ab$ satisfying \rul{sCNG_{\mathit c}} for each $c \in \lang{L} \setminus \lang{L}_\Ab$. 
\end{defn}

Notice the difference between the notion of superabelian logic and the well known terminology of {\em superintuitionistic} logics, which are {\em axiomatic extensions} (instead of {\em expansions}) of intuitionistic logic. Actually, as we shall soon see, $\Ab$ has no non-trivial axiomatic (or even finitary) extension.

There are many natural examples of superabelian logics. In Section~\ref{s:InfAbel} we will focus on infinitary extensions of $\Ab$ and in Sections \ref{s:PointedAbel} and \ref{s:LukUnbound} we focus on some extensions of pointed Abelian logic (the least expansion of Abelian logic in the language with the additional constant $\fa$). Other natural examples of superabelian logics are the expansions of $\Ab$ with connectives corresponding to additional arithmetical operations on real numbers, such as multiplication or division. Other examples would be various types of congruential modal Abelian logics, i.e., logics where:
$$
\f \imp \p,  \p \imp \f  \wdash \Box\f \imp \Box \p
$$

We introduce now the algebraic semantics of Abelian logic and all superabelian logics.

\begin{defn}
An \emph{Abelian lattice-ordered group} (\emph{Abelian $\ell$-group}, see e.g.~\cite{Fuchs:PartiallyOrdered}) is a structure of the form $\tuple{A, +, -,\lor,\land,0}$ such that:
\begin{itemize}
\item $\tuple{A, +, -, 0}$ is an Abelian group, i.e.\ $+$ is a commutative and associative binary operation with unit $0$ and $-$ is the inverse operation (that is, for each $a\in A$, we have $a + (-a) = 0$),
\item $\tuple{A,\lor,\land}$ is a lattice, i.e.\ the binary relation $\leq$ defined as $a \leq b$ iff $a \lor b = b$ iff $a \land b = a$ turns out to be an order in which, for each $x,y \in A$, $x \lor y$ and $x\land y$ are respectively the supremum and the infimum of $\{x,y\}$,
\item it satisfies the monotonicity condition, that is, for each $a,b,c\in A$: $a \leq b$ implies $a+c \leq b+c$.
\end{itemize}
\end{defn}

It is well known that the defining conditions of Abelian $\ell$-groups can be expressed by means of equations, so they form a variety which we denote by $\mathbb{AL}$. Moreover, the lattice reduct of any Abelian $\ell$-group is distributive.

In order to match the language that we have chosen to introduce the logic \Ab, Abelian $\ell$-groups can be equivalently seen as structures $\tuple{\A,+,\imp,\lor,\land,0}$ based on the interdefinability: $-a:=a \imp 0$ and $a \imp b:=b-a$. To avoid excessive parentheses, we stipulate that the unary operation $-$ has precedence over all binary operations. Natural examples of (linearly ordered) Abelian $\ell$-groups are those of the integers $\Z$, the rational numbers $\Q$, and the real numbers $\R$. Note that $a\to b \geq 0$ iff $a\leq b$.

Let $\lang{L}$ be any propositional language containing $\lang{L}_\Ab$ and let $\A$ be an $\lang{L}$-algebra with an Abelian $\ell$-group reduct. An {\em $\A$-evaluation} is a homomorphism from $\alg{\Fm}_{\lang{L}}$ to $\A$. A consecution $\Gamma \wdash \f$ is valid in $\alg{A}$, $\Gamma \vDash_\A \f$ in symbols, if for every $\A$-evaluation $e$ it holds that, whenever $e(\p) \geq 0$ for every $\p \in \Gamma$, then also $e(\f)  \geq 0$. Given a class $\K$ of $\lang{L}$-algebras with an Abelian $\ell$-group reduct, we write $\Gamma \vDash_\K \f$ if $\Gamma \vDash_\B \f$ for each $\B \in \K$. It is well known that $\vDash_\K$ is a superabelian logic.

Let $\logic{L}$ be a superabelian logic in the language $\lang{L}$. By $\ralg{L}$ we denote the class of {\em $\logic{L}$-algebras,} i.e., $\lang{L}$-algebras $\alg{A}$ which have an Abelian $\ell$-group reduct and $\logic{L} \subseteq {\vDash_\alg{A}}$. For the purposes of this paper, we sometimes write that $\alg{A}$ is a \emph{model} of $\logic{L}$ instead of saying that $\alg{A}$ is an $\logic{L}$-algebra.

The usual techniques of algebraic logic allow us to prove that $\ralg{\Ab}=\mathbb{AL}$ and, for any superabelian logic $\logic{L}$,  we have the following completeness theorem: for each $\Gamma \cup \{\f\}$, we have 
$$
\Gamma \vdash_\logic{L} \f\qquad \text{iff}\qquad \Gamma \vDash_\ralg{L} \f.
$$ 

Furthermore, one can easily check that the following consecutions are valid in \Ab:
\begin{itemize}
\item[]$\f \wdash \tr \land \f \imp \tr$ 
\item[]$\f \wdash \tr \imp \tr \land \f$
\item[]$\{\tr \land \f \imp \tr,\tr \imp \tr \land \f\} \wdash \f $
\end{itemize}
Taken together, these consecutions give the condition \rul{Alg} from~\cite[Theorem~2.9.5]{Cintula-Noguera:TheBook}, which guarantees that \Ab\ (resp.\ any superabelian logic $\logic{L}$) is algebraically implicative (that is, weakly implicative and algebraizable in the sense of Blok and Pigozzi~\cite{Blok-Pigozzi:AlgebraizableLogics}) and the class $\mathbb{AL}$ (resp.\ $\ralg{L}$) is its equivalent algebraic semantics. Therefore,  there is a dual order isomorphism between the lattices of finitary extensions of $\Ab$ and subquasivarieties of $\mathbb{AL}$. 

We are interested, for a given superabelian logic $\logic{L}$,  in completeness theorems  with respect to various (classes of) $\logic{L}$-algebras, instead of the whole equivalent algebraic semantics. For reasons apparent later, we distinguish two kinds of completeness theorems.

\begin{defn}
A superabelian logic $\logic{L}$ enjoys the {\em (Finite) Strong $\K$-Completeness} with respect to a class $\K$ of $\logic{L}$-algebras, $\mathrm{(F)}$\SKC\ for short, if for every (finite) set $\Gamma\cup\{\f\}$ of formulas,
$$
\Gamma \vdash_\logic{L} \f\qquad \text{iff}\qquad \Gamma \vDash_\K \f.
$$
\end{defn}

A particularly interesting class of $\logic{L}$-algebras is that of those whose underlying lattice is linearly ordered; following the common notation in the literature (see~\cite{Cintula-Noguera:TheBook}), we denote this class as $\lalg{L}$ and introduce the following central notion. 

\begin{defn}
A superabelian logic $\logic{L}$ is \emph{semilinear} if it has the Strong $\lalg{L}$-Completeness.
\end{defn}

As we will see later in Example~\ref{e:semilinearity of superabelian logics}, not all superabelian logics have to be semilinear. There is a very useful characterization of semilinearity among superabelian logics with a countable presentation (which is, of course, the case for \Ab, all finitary superabelian logics and, as we will see later, even some prominent infinitary ones) using the notion of a strong disjunction (see~\cite[Definition 5.1.2]{Cintula-Noguera:TheBook} and note that the definition also requires the validity of $\f\wdash \f\vee \p$ and $\p\wdash \f\vee \p$, which is clearly the case in any superabelian logic).

\begin{defn}
The lattice connective $\lor$ is a \emph{strong disjunction} in a superabelian logic $\logic{L}$ if for each set  $\Gamma \cup\Phi\cup \Psi \cup \{\xi\}$ of formulas,
$$\frac{\Gamma,\Phi\vdash_\logic{L}\xi\qquad \Gamma,\Psi\vdash_\logic{L}\xi}{\Gamma,\Phi\lor \Psi\vdash_\logic{L}\xi},$$
 where $\Phi\lor \Psi$ is short for $\{\f \lor \p \mid \f \in \Phi, \p \in \Psi\}.$
\end{defn}

The next theorem follows directly from~\cite[Theorems 6.2.2 and 5.5.14]{Cintula-Noguera:TheBook}.

\begin{thm} \label{t:equivalence_disj_semilinearity}
Let \logic{L} be a superabelian logic with a countable presentation. Then, $\logic{L}$ is semilinear iff $\lor$ is a strong disjunction.
\end{thm}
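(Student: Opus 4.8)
The plan is to prove the two implications separately, following the standard pattern for semilinearity results in the style of~\cite{Cintula-Noguera:TheBook}, adapted to the superabelian setting.

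For the direction from semilinearity to strong disjunction, I would argue semantically. Assume $\logic{L}$ is semilinear, so it has the Strong $\lalg{L}$-Completeness, and suppose $\Gamma,\Phi\vdash_\logic{L}\xi$ and $\Gamma,\Psi\vdash_\logic{L}\xi$. To show $\Gamma,\Phi\lor\Psi\vdash_\logic{L}\xi$ it suffices, by $\lalg{L}$-completeness, to check $\Gamma,\Phi\lor\Psi\vDash_{\alg{A}}\xi$ for every linearly ordered $\logic{L}$-algebra $\alg{A}$. Fix such an $\alg{A}$ and an $\alg{A}$-evaluation $e$ with $e(\gamma)\geq 0$ for all $\gamma\in\Gamma$ and $e(\f\lor\p)\geq 0$ for all $\f\in\Phi$, $\p\in\Psi$. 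Since the lattice order of $\alg{A}$ is linear, $e(\f\lor\p) = \max\{e(\f),e(\p)\}$, so for each pair $\f\in\Phi$, $\p\in\Psi$ at least one of $e(\f)\geq 0$, $e(\p)\geq 0$ holds. A short case analysis then shows that either $e(\f)\geq 0$ for all $\f\in\Phi$ or $e(\p)\geq 0$ for all $\p\in\Psi$ (if both failed, witnessed by some $\f_0\in\Phi$ and $\p_0\in\Psi$, then $e(\f_0\lor\p_0)<0$, a contradiction). In the first case, $e$ witnesses the premises of $\Gamma,\Phi\vdash_\logic{L}\xi$, hence $e(\xi)\geq 0$; in the second, similarly from $\Gamma,\Psi\vdash_\logic{L}\xi$. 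Thus $\Gamma,\Phi\lor\Psi\vDash_{\alg{A}}\xi$, and we are done.

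For the converse, suppose $\lor$ is a strong disjunction; I would prove the Strong $\lalg{L}$-Completeness, i.e.\ that $\Gamma\vDash_{\lalg{L}}\f$ implies $\Gamma\vdash_\logic{L}\f$ (the other inequality being the general soundness). The standard tool is a Lindenbaum–Tarski argument producing a linearly ordered counterexample: assuming $\Gamma\nvdash_\logic{L}\f$, I would extend $\Gamma$ to a theory $T$ that is maximal with respect to not proving $\f$ among theories built using the countable presentation, and then show the Lindenbaum matrix $\langle\alg{\Fm{L}}/{\equiv_T},[T]\rangle$ has a linearly ordered algebra reduct. Here is where the countability of the presentation and strong disjunction enter: one uses the axiom \rul{pre}, $(\f\imp\p)\lor(\p\imp\f)$, together with the strong disjunction property to show that $T$ is \emph{linear} in the sense that for all $\f,\p$, either $\f\imp\p\in T$ or $\p\imp\f\in T$ — indeed, if neither held, then by maximality $T,\f\imp\p\vdash_\logic{L}\f$ and $T,\p\imp\f\vdash_\logic{L}\f$, so by the strong disjunction rule $T,(\f\imp\p)\lor(\p\imp\f)\vdash_\logic{L}\f$, and since $(\f\imp\p)\lor(\p\imp\f)$ is a theorem, $T\vdash_\logic{L}\f$, a contradiction. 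Linearity of $T$ translates, via algebraizability, into linearity of the order of $\alg{\Fm{L}}/{\equiv_T}$. The quotient is an $\logic{L}$-algebra by algebraizability, it is linearly ordered, and the identity evaluation sends $\Gamma$ into the designated filter but not $\f$, witnessing $\Gamma\nvDash_{\lalg{L}}\f$.

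The main obstacle is the converse direction, specifically the construction of the maximal theory $T$ and the verification that the strong disjunction rule applies to it. The subtlety is that strong disjunction is stated for arbitrary (possibly infinite) sets $\Phi,\Psi$, while the maximality argument needs it for the singleton instance used with \rul{pre}; more delicately, to run the Lindenbaum extension one needs $T$ to be closed under the rules of a \emph{fixed} countable presentation and to verify that such a maximal $T$ exists — this is where countability of the presentation is essential, since it guarantees one can enumerate the relevant "linearity obstructions" $(\f\imp\p)\lor(\p\imp\f)$ and resolve them one at a time while preserving $\f\notin$ the theory. I would handle this by invoking the general machinery for weakly implicative logics with a countable presentation (the Linear Extension Property / Pair Extension Property style arguments of~\cite{Cintula-Noguera:TheBook}), checking that the hypotheses of that machinery reduce exactly to $\lor$ being a strong disjunction together with \rul{pre}, both of which we have.
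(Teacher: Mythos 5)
Your proposal is correct and takes essentially the same route as the paper: the paper merely notes that \rul{pre} and \rul{MP_\lor} are valid and cites the general semilinearity machinery for countably presented weakly implicative logics in~\cite{Cintula-Noguera:TheBook} (Theorems 6.2.2 and 5.5.14), which is precisely what you unpack — the semantic argument over linearly ordered algebras for one direction, and the linear/pair-extension (Lindenbaum-style) argument, where the countable presentation is genuinely needed and which you correctly delegate to that same machinery, for the other.
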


The power of the theorem above is that there is an easy syntactical way (see Corollary~\ref{c:semilinearity_criterium}) to check whether $\vee$ is a strong disjunction using a simple syntactical construction.

\begin{defn}
Let $\Gamma \cup \{\varphi\} \subseteq  \Fm{L}$. The \emph{$\lor$-form} of\/ a consecution $\Gamma \wdash \f$ is the set of the consecutions $$(\Gamma \wdash \f) ^\lor=\{\Gamma \lor \p \wdash \f \lor \p \mid \p \in \Fm{L}\}.$$ 
\end{defn}

\begin{lemma}\label{l:about nabla-forms}

Let\/ $\logic{L}$ be a superabelian logic,  $\A$ an $\logic{L}$-algebra, and\/ $\Gamma\cup\{\f\}$ a set of formulas. If we have $\Gamma \lor \p \vDash_\A \f \lor \p$ for each $\p$, then we also have $\Gamma \vDash_\A \f$.
Moreover, if $\A \in \lalg{L}$, the converse implication also holds.
\end{lemma}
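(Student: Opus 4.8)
I would prove the two implications separately; each is short once the right evaluation is fed to the hypothesis. Before starting, recall that in the $\lor$-form $(\Gamma\wdash\f)^\lor$ the formula $\p$ is a variable not occurring in $\Gamma\cup\{\f\}$; this freshness is essential, since without it the first implication fails (take $\Gamma=\{p\}$, $\f=q$, $\p=p$, $\A=\Z$: then $p\lor p\vDash_\Z q\lor p$ holds, yet $p\not\vDash_\Z q$). Throughout, the only facts about $\A$ used for the first implication are that it is an $\logic{L}$-algebra, hence its lattice reduct is a (distributive) lattice with idempotent join; for the second, that this lattice reduct is linearly ordered.

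\emph{First implication.} Assume $\Gamma\lor\p\vDash_\A\f\lor\p$, fix an $\A$-evaluation $e$ with $e(\gamma)\geq 0$ for every $\gamma\in\Gamma$, and aim at $e(\f)\geq 0$. The plan is to apply the hypothesis to the evaluation that agrees with $e$ everywhere except that it sends $\p$ to the value $e(\f)$: let $e'$ be the $\A$-evaluation with $e'(v)=e(v)$ for each variable $v\neq\p$ and $e'(\p)=e(\f)$ (well defined, since evaluations are freely determined on variables). As $\p$ occurs in no $\gamma\in\Gamma$ and not in $\f$, we have $e'(\gamma)=e(\gamma)$ for all $\gamma\in\Gamma$ and $e'(\f)=e(\f)$. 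Then $e'(\gamma\lor\p)=e(\gamma)\lor e(\f)\geq e(\gamma)\geq 0$ for each $\gamma$, so $e'$ satisfies all premises of $\Gamma\lor\p\wdash\f\lor\p$; hence $0\leq e'(\f\lor\p)=e(\f)\lor e(\f)=e(\f)$ by idempotency of $\lor$. Equivalently, one may note that $\vDash_\A$ is closed under substitutions and apply the substitution $\p\mapsto\f$ directly to the consecution.

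\emph{Second implication (the ``moreover'').} Assume $\A\in\lalg{L}$ and $\Gamma\vDash_\A\f$, and let $e$ be an $\A$-evaluation with $e(\gamma\lor\p)=e(\gamma)\lor e(\p)\geq 0$ for every $\gamma\in\Gamma$; the goal is $e(\f\lor\p)\geq 0$. Here I would split on the sign of $e(\p)$, which is legitimate because $\A$ is linearly ordered. If $e(\p)\geq 0$, then $e(\f\lor\p)\geq e(\p)\geq 0$. If $e(\p)<0$, then for each $\gamma$ the join $e(\gamma)\lor e(\p)=\max\{e(\gamma),e(\p)\}$ is $\geq 0>e(\p)$, hence must equal $e(\gamma)$, so $e(\gamma)\geq 0$; thus $e$ satisfies the premises of $\Gamma\vDash_\A\f$, giving $e(\f)\geq 0$ and therefore $e(\f\lor\p)\geq e(\f)\geq 0$.

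I do not expect a genuine obstacle here: the only mildly non-obvious step is the choice of $e'$ (equivalently, the substitution $\p\mapsto\f$) that collapses $\f\lor\p$ back to $\f$ while keeping the premises satisfied, and everything else is bookkeeping with the lattice order and the laws of $\ell$-groups. The single point requiring care is precisely that $\p$ is fresh with respect to $\Gamma\cup\{\f\}$.
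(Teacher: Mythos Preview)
Your proof is correct and follows essentially the same strategy as the paper: for the first implication the paper appeals directly to the substitution instance $\p\mapsto\f$ (your ``equivalently'' remark), obtaining $\Gamma\lor\f\vDash_\A\f\lor\f$ and concluding via idempotency, while for the second it performs the same case split using linearity of $\A$. Your explicit observation that $\p$ must be fresh with respect to $\Gamma\cup\{\f\}$, together with the counterexample, usefully sharpens a point that the paper leaves implicit in both the statement and the proof.
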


\begin{proof}
Take an evaluation $e$ such that, for each $\xi \in \Gamma$, we have $e(\xi) \geq 0$. Then clearly $e(\xi \lor \f) \geq 0$ ($\alg{A}$ has a lattice reduct). Thus by our assumption for $\p=\f$, we know that $e(\f) = e(\f \lor \f) \geq 0$.
	
For the converse implication, take an evaluation $e$ such that $e(\xi \lor \p) \geq 0$ for each $\xi \in \Gamma$. Since we assume that $\A$ is linearly ordered, we can derive that either $e(\p)  \geq 0$ or, for all $\xi \in \Gamma$, we have $e(\xi) \geq 0$.
In the latter case, the assumption $\Gamma \vDash_\A \f$ gives us $e(\f)  \geq 0$ and so in both cases we have $e(\f \lor \p)  \geq 0$.
\end{proof}

The following theorem is a particular version of \cite[Theorem 5.2.6]{Cintula-Noguera:TheBook} (clearly, the consecutions $\rul{I_\lor}$ and $\rul{C_\lor}$ mentioned in that theorem are valid in each superabelian logic).

\begin{thm} \label{t:strong disjunction}
Let\/ $\logic{L}$ be a superabelian logic with a presentation $\AS$.
Then, the following are equivalent: 
\begin{enumerate}
\item $\lor$ is a strong disjunction. 
\item $R^{\lor}\subseteq \logic{L}$ for each consecution $R\in \AS$.
\item $R^{\lor}\subseteq \logic{L}$ for each consecution $R\in \logic{L}$.
\end{enumerate}
\end{thm}

Now we are ready to give the promised direct alternative proof of semilinearity of $\Ab$, which originally was established using a completeness theorem (see e.g.\ \cite[Example 6.2.3]{Cintula-Noguera:TheBook}).

\begin{thm}\label{t:semilinearity of Ab}
$\Ab$ is a semilinear logic with strong disjunction $\vee$.
\end{thm}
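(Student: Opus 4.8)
The plan is to turn this into a semantic statement through the completeness theorem $\vdash_\Ab\ =\ \vDash_{\mathbb{AL}}$ and then to invoke the classical fact that Abelian $\ell$-groups are \emph{representable}: every Abelian $\ell$-group is a subdirect product of linearly ordered Abelian groups (see, e.g.,~\cite{Fuchs:PartiallyOrdered,Birkhoff:LatticeTheory}).

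First I would record that fact in a convenient form: any $\A\in\mathbb{AL}$ embeds, via an $\ell$-group homomorphism $\iota$, into a product $\prod_{i\in I}\B_i$ of members of $\lalg{\Ab}$ such that every $\pi_i\circ\iota$ is surjective. Since $\iota$ is an $\ell$-embedding it preserves and reflects the order, so for $a\in A$ we have $a\geq 0$ in $\A$ iff $\pi_i(\iota(a))\geq 0$ in $\B_i$ for all $i\in I$. From this I would deduce that $\vDash_{\mathbb{AL}}\ =\ \vDash_{\lalg{\Ab}}$. The inclusion $\vDash_{\mathbb{AL}}\ \subseteq\ \vDash_{\lalg{\Ab}}$ is trivial because $\lalg{\Ab}\subseteq\mathbb{AL}$; for the converse, take $\A\in\mathbb{AL}$ and an $\A$-evaluation $e$ with $e(\gamma)\geq 0$ for all $\gamma\in\Gamma$, and set $e_i:=\pi_i\circ\iota\circ e$, which is a $\B_i$-evaluation with $e_i(\gamma)\geq 0$ for all $\gamma\in\Gamma$; if $\Gamma\vDash_{\lalg{\Ab}}\f$ then $e_i(\f)\geq 0$ for every $i$, hence $e(\f)\geq 0$. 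Combining this with completeness yields $\vdash_\Ab\ =\ \vDash_{\lalg{\Ab}}$, i.e.\ the Strong $\lalg{\Ab}$-Completeness, so $\Ab$ is semilinear.

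For the ``strong disjunction'' clause, since the presentation in Table~\ref{tab:Abel} is finite, hence countable, Theorem~\ref{t:equivalence_disj_semilinearity} immediately upgrades the semilinearity just obtained to the statement that $\vee$ is a strong disjunction, completing the proof. (If one prefers to bypass Theorem~\ref{t:equivalence_disj_semilinearity}, the rule can be checked directly from $\vdash_\Ab\ =\ \vDash_{\lalg{\Ab}}$: in a chain $\A$, if an $\A$-evaluation $e$ sends every $\f\lor\p$ with $\f\in\Phi$ and $\p\in\Psi$ to a non-negative element, then it must send all of $\Phi$ or all of $\Psi$ to non-negatives, for otherwise $e(\f_0\lor\p_0)=\max\{e(\f_0),e(\p_0)\}<0$ for some $\f_0\in\Phi$, $\p_0\in\Psi$; and this is precisely what is needed to derive $\Gamma,\Phi\lor\Psi\vDash_\A\xi$ from $\Gamma,\Phi\vDash_\A\xi$ and $\Gamma,\Psi\vDash_\A\xi$.)

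The only genuinely external ingredient is the representation of Abelian $\ell$-groups as subdirect products of chains, and I expect getting this step stated and applied correctly to be the main point; the remainder is just transporting the truth condition $e(\cdot)\geq 0$ along $\iota$ and the projections. Should a self-contained argument be desired, the representation follows from the theory of values: for each $0\neq a\in A$ choose a convex $\ell$-subgroup $V_a$ maximal with respect to $a\notin V_a$; then $\A/V_a\in\lalg{\Ab}$ and $\bigcap_{a\neq 0}V_a=\{0\}$, so the canonical map $\A\to\prod_{a\neq 0}\A/V_a$ is a subdirect embedding.
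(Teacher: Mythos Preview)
Your proof is correct, but it takes a different route from the paper's.

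The paper argues in the opposite direction: it first shows that $\vee$ is a strong disjunction and then obtains semilinearity from Theorem~\ref{t:equivalence_disj_semilinearity}. Concretely, it invokes~\cite[Theorem~5.2.6]{Cintula-Noguera:TheBook}, which reduces the strong-disjunction property to checking that the $\vee$-forms of the \emph{rules} in the presentation are derivable; so it suffices to verify $\Adj^\vee$ and $\MP^\vee$ in $\Ab$, which is done by a direct semantic computation in an arbitrary Abelian $\ell$-group using distributivity and the identity $(a\vee b)+c=(a+c)\vee(b+c)$. Your approach instead imports the classical representation theorem (every Abelian $\ell$-group is a subdirect product of chains) to get $\vDash_{\mathbb{AL}}=\vDash_{\lalg{\Ab}}$ directly, and then reads off strong disjunction from Theorem~\ref{t:equivalence_disj_semilinearity}.

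Both are fine; the trade-off is that the paper's route is more self-contained (it needs no external structure theory beyond basic $\ell$-group identities) and, more importantly, it exhibits exactly the mechanism that powers Corollary~\ref{c:semilinearity_criterium}: once $\Adj^\vee$ and $\MP^\vee$ are secured, adding any set $\mathcal S$ of consecutions whose $\vee$-forms are derivable preserves the strong-disjunction property. Your argument, by contrast, is cleaner conceptually but relies on an outside theorem and does not by itself set up that transfer principle for extensions.
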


\begin{proof}
As $\Ab$ has a countable presentation, it suffices to prove that $\vee$ is a strong disjunction which, thanks to~Theorem~\ref{t:strong disjunction}, follows if we prove that $\Adj^\lor$ and $\MP^\lor$ are valid in $\Ab$.

For the first one, let \A\ be an Abelian $\ell$-group and $e$ an $\A$-evaluation such that $e(\f \lor \xi) \geq 0$ and $e(\p \lor \xi) \geq 0$. Then, by the assumptions and distributivity of the lattice operations, we have:
$$
e((\f \land \p) \lor \xi) = e((\f \lor \xi) \land (\p \lor \xi)) = e(\f \lor \xi) \land e(\p \lor \xi)
\geq 0.
$$

To prove the validity of $\MP^\lor$, consider an evaluation $e$ such that $0 \leq e(\f \lor \xi)$ and  $0 \leq e((\f \imp \p) \lor \xi)$. This gives us $0=e(\f \lor \xi) \land 0=e((\f \imp \p) \lor \xi) \land 0$. Using the distributivity of the lattice operations, we obtain 
$$
0=e((\f \land \tr) \lor (\xi \land \tr)) = e(((\f \imp \p)  \land \tr) \lor (\xi \land \tr)).
$$
For simplicity, let us set $e(\f \land \tr) = a$, $e(\p) = b$, $e(\xi \land \tr) = c$ and $e((\f\imp \p)  \land \tr)=d$. As in $\ell$-groups it holds $(a \lor b) + c= (a+c) \lor (b+c)$, we obtain
$$
0=(a \lor c)+(d \lor c)=(a+d) \lor (a+c) \lor (c+d) \lor (2c). 
$$
Note that, due to monotonicity of addition, we have 
\begin{align*}
a+d & \leq e(\f)+ (e(\f) \imp e(\p) ) = b\\
(a+c) \lor (c+d) \lor (2c) & \leq  (0 + c)\vee (c+ 0) \vee (c+0)\leq c.
\end{align*}
Therefore, we obtain $0 \leq b \lor c=e(\p) \lor e(\xi \wedge \tr) \leq e(\p \lor \xi)$ as required.
\end{proof}

The following corollary is a direct consequence of Theorems~\ref{t:semilinearity of Ab}, 
\ref{t:equivalence_disj_semilinearity}, and~\ref{t:strong disjunction}.

\begin{cor}\label{c:semilinearity_criterium}
Let $\logic{L}$ be a superabelian logic expanding $\Ab$ by a set $\mathcal{S}$ of consecutions such that $R^\lor$ is valid in $\logic{L}$ for each $R \in \mathcal{S}$. Then, $\vee$ is a strong disjunction in~$\logic{L}$. If, furthermore, $\logic{L}$ has a countable presentation, then $\logic{L}$ is semilinear.
\end{cor}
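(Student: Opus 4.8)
The plan is to deduce the corollary by combining three facts that are already in place: that $\lor$ is a strong disjunction in $\Ab$ (Theorem~\ref{t:semilinearity of Ab}), the stability of strong disjunctions under expansions recorded in~\cite[Theorem~5.2.8]{Cintula-Noguera:TheBook}, and the syntactic characterization of semilinearity from Theorem~\ref{t:equivalence_disj_semilinearity}. No new computation should be needed; the work is entirely in checking that the hypotheses of those results are met.

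First I would fix the presentation $\AS \cup \mathcal{S}$ of $\logic{L}$, where $\AS$ is the axiomatic system of $\Ab$ from Table~\ref{tab:Abel}. By Theorem~\ref{t:semilinearity of Ab}, $\lor$ is a strong disjunction in $\Ab$; in particular the basic disjunction consecutions (such as $\rul{ub}$, $\rul{i_\lor}$, $\rul{pre}$, and $\rul{MP_\lor}$) that serve as the standing hypothesis of~\cite[Theorem~5.2.8]{Cintula-Noguera:TheBook} are valid in $\Ab$, hence in $\logic{L}$ since $\logic{L}$ expands $\Ab$. Next I would observe that the $\lor$-forms of all the rules of the chosen presentation of $\logic{L}$ are valid in $\logic{L}$: for the rules coming from $\AS$ this is precisely what the proof of Theorem~\ref{t:semilinearity of Ab} establishes (via~\cite[Theorem~5.2.6]{Cintula-Noguera:TheBook}, reducing the claim to $\Adj^\lor$ and $\MP^\lor$), so their $\lor$-forms are valid already in $\Ab$ and a fortiori in $\logic{L}$; for the rules $R \in \mathcal{S}$ the validity of $R^\lor$ in $\logic{L}$ is exactly the hypothesis of the corollary.

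With these observations in hand, the first assertion is an immediate application of~\cite[Theorem~5.2.8]{Cintula-Noguera:TheBook}: $\lor$ is a strong disjunction in $\Ab$ and $\logic{L}$ is an expansion of $\Ab$ by consecutions whose $\lor$-forms are valid in $\logic{L}$, so $\lor$ is a strong disjunction in $\logic{L}$. For the second assertion, suppose in addition that $\logic{L}$ has a countable presentation. Then $\logic{L}$ is a superabelian logic with a countable presentation in which $\lor$ is a strong disjunction, and Theorem~\ref{t:equivalence_disj_semilinearity} applies directly to give that $\logic{L}$ is semilinear.

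I do not anticipate any real obstacle: the statement is engineered to be a convenient repackaging of the preceding results. The only point that calls for a little care is the bookkeeping in the step above — one must confirm that the $\lor$-forms of the $\Ab$-part of the presentation, and not merely those of $\mathcal{S}$, are covered, which is guaranteed by the proof of Theorem~\ref{t:semilinearity of Ab} — so that the hypothesis of~\cite[Theorem~5.2.8]{Cintula-Noguera:TheBook} is genuinely satisfied for the whole presentation $\AS \cup \mathcal{S}$ rather than just for the new rules.
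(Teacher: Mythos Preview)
Your proposal is correct and follows essentially the same approach as the paper, which simply records the corollary as a direct consequence of Theorems~\ref{t:semilinearity of Ab} and~\ref{t:equivalence_disj_semilinearity} together with~\cite[Theorem~5.2.8]{Cintula-Noguera:TheBook}. Your write-up merely spells out the bookkeeping (checking that the $\lor$-forms of the $\Ab$-rules are already covered by Theorem~\ref{t:semilinearity of Ab}) that the paper leaves implicit.
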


In order to obtain completeness results for the logics studied in this paper, we will make use of the following two theorems and corollary which offer algebraic characterizations adapted from the general framework of the monograph~\cite{Cintula-Noguera:TheBook} to the setting of superabelian logics.
For clarity, we first recall the definitions of the standard class operators from universal algebra.
\begin{defn}
Let\/ $\K$ be a class of $\lang{L}$-algebras. We define the following:

\begin{itemize}
    \item $\mathbf{I}(\K)$, $\mathbf{S}(\K)$, $\mathbf{H}(\K)$, $\mathbf{P}(\K)$, $\PU(\K)$, and $\P_\omega(\K)$ are the classes of all isomorphic images, subalgebras, homomorphic images, direct products, ultraproducts, and countably-filtered products of algebras in $\K$, respectively.
    \item $\K_{\RFSI}$ is the class of relatively finitely subdirectly irreducible members of~$\K$.
    \item We use $\alg{1}$ to denote an arbitrary fixed trivial one-element algebra.
\end{itemize}
\end{defn}

\newpage

Since superabelian logics are algebraically implicative, we can recall the following two useful characterizations of (finite) strong completeness proved as \cite[Corollary 3.8.3 and Corollary 3.8.7]{Cintula-Noguera:TheBook} (in the latter case we also use the fact that for semilinear logics $\Alg^*(\logic{L})_\RFSI=\lalg{L}$, see~\cite[Theorem 6.1.7]{Cintula-Noguera:TheBook}).

\begin{thm}\label{t:semilinear completeness}
Let\/ $\logic{L}$ be a  superabelian logic and\/ $\K \subseteq \ralg{L}$. Then, the following are equivalent:
	
\begin{enumerate}
	\item $\logic{L}$ is strongly complete w.r.t.\ $\K$.
    \item $\ralg{L}$ is generated by $\K$ as a generalized quasivariety, i.e.\ $\ralg{L} = \ISP_\omega(\K)$.
\end{enumerate}
\end{thm}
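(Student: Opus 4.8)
The plan is to prove the cycle of implications $(1) \Rightarrow (2) \Rightarrow (3) \Rightarrow (1)$, specializing the general completeness machinery of~\cite{Cintula-Noguera:TheBook} to superabelian logics. Throughout I would use three facts available for every superabelian logic $\logic{L}$: (a) $\logic{L}$ is complete with respect to its equivalent algebraic semantics, i.e.\ ${\vdash_\logic{L}} = {\vDash_{\ralg{L}}}$ (which holds because $\logic{L}$ is algebraically implicative); (b) the class operations used to form $\ISP_\omega(\K)$ preserve validity of arbitrary consecutions, so that ${\vDash_{\ISP_\omega(\K)}} = {\vDash_\K}$, and more generally ${\vDash_{\K_2}} \subseteq {\vDash_{\K_1}}$ whenever $\K_1 \subseteq \K_2$; and (c) $\ralg{L}$ is a generalized quasivariety, hence closed under those operations, so that $\K \subseteq \ralg{L}$ yields $\ISP_\omega(\K) \subseteq \ralg{L}$ and in particular $\lalg{L} \subseteq \ralg{L}$.

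For $(1) \Rightarrow (2)$, assume ${\vdash_\logic{L}} = {\vDash_\K}$; the inclusion $\ISP_\omega(\K) \subseteq \ralg{L}$ is already noted in (c), and for the reverse inclusion I would invoke the general completeness theorem for algebraizable logics: the logic determined by the class $\K$ (equivalently, by the consequence ${\vDash_\K}$) is a superabelian logic whose equivalent algebraic semantics is exactly the generalized quasivariety generated by $\K$, namely $\ISP_\omega(\K)$; since by hypothesis that consequence is ${\vdash_\logic{L}}$, this logic is $\logic{L}$, so $\ralg{L} = \ISP_\omega(\K)$. The step $(2) \Rightarrow (3)$ is immediate, as $\lalg{L} \subseteq \ralg{L} = \ISP_\omega(\K)$. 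For $(3) \Rightarrow (1)$, soundness ${\vdash_\logic{L}} \subseteq {\vDash_\K}$ follows from $\K \subseteq \ralg{L}$ together with fact (a); for completeness, suppose $\Gamma \vDash_\K \f$, then $\Gamma \vDash_{\ISP_\omega(\K)} \f$ by fact (b), hence $\Gamma \vDash_{\lalg{L}} \f$ because $\lalg{L} \subseteq \ISP_\omega(\K)$ by hypothesis, and finally $\Gamma \vdash_\logic{L} \f$ since $\logic{L}$, being semilinear, enjoys the Strong $\lalg{L}$-Completeness. This closes the cycle.

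Apart from routine bookkeeping about the directions of these inclusions, the one step carrying genuine content is the reverse inclusion $\ralg{L} \subseteq \ISP_\omega(\K)$ inside $(1) \Rightarrow (2)$: it rests on the fact that passing from a class of algebras to the logic it determines and back to the equivalent algebraic semantics recovers precisely the generalized quasivariety generated by the class, which is the heart of the completeness theory for algebraizable logics and which I would import wholesale from~\cite{Cintula-Noguera:TheBook}. Semilinearity, by contrast, is used only once, in $(3) \Rightarrow (1)$, precisely to pass from validity in all linearly ordered $\logic{L}$-algebras to derivability in $\logic{L}$.
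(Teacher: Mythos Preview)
Your argument is correct, and for $(1)\Leftrightarrow(2)$ and $(2)\Rightarrow(3)$ it matches the paper exactly (the paper simply cites \cite[Corollary~3.8.3]{Cintula-Noguera:TheBook} for the first equivalence). The only difference is in how the cycle is closed: the paper proves $(3)\Rightarrow(2)$ algebraically, while you prove $(3)\Rightarrow(1)$ logically.

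Concretely, the paper invokes the characterization of semilinearity whereby the relatively finitely subdirectly irreducible $\logic{L}$-algebras are exactly the chains, hence $\ralg{L}\subseteq\mathbf{SP}(\lalg{L})$, and then pushes this through a short chain of class-operator inclusions
\[
\ralg{L}\subseteq\mathbf{SP}(\lalg{L})\subseteq\mathbf{SP}\,\ISP_\omega(\K)\subseteq\ISP_\omega(\K)\subseteq\ralg{L}
\]
to recover $(2)$ directly. You instead use semilinearity in its defining form (Strong $\lalg{L}$-Completeness): from $\Gamma\vDash_\K\f$ pass to $\Gamma\vDash_{\ISP_\omega(\K)}\f$, hence to $\Gamma\vDash_{\lalg{L}}\f$ by hypothesis~$(3)$, and conclude $\Gamma\vdash_\logic{L}\f$. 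Your route is slightly more direct and avoids the operator juggling; the paper's route has the minor advantage of showing explicitly why condition~$(3)$ already forces the full equality $\ralg{L}=\ISP_\omega(\K)$ at the algebraic level. Either way, semilinearity is the single substantive ingredient, and you have identified and used it correctly.
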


\begin{thm}\label{t:semilinear finitary completeness}
Let $\logic{L}$ be a superabelian logic and $\K \subseteq \ralg{L}$. Then, the following are equivalent:
\begin{enumerate}
\item $\logic{L}$ is finitely strongly complete w.r.t.\ $\K$.
\item $\ralg{L}$ is generated by $\K$ as a quasivariety, i.e.\ $\ralg{L} = \mathbf{ISP}\PU(\K)$.
\end{enumerate}
If furthermore $\logic{L}$ is finitary and semilinear, then we can add
    \begin{enumerate}\setcounter{enumi}{2}
\item $\lalg{L} \subseteq \ISPU(\K,\oa)$.
\end{enumerate}
\end{thm}

Finally, we add a simple yet important corollary of the previous which follows from the fact that $\ISPU(\K_1,\K_2,\dots,\K_n)=\bigcup_{i \leq n} \ISPU(\K_i)$~\cite[Theorem 5.6]{Bergman:UniversalAlgebra}.

\begin{cor}\label{c:ultra-decomposition of semilinear logics}
Let\/ $\logic{L}$ be a finitary semilinear superabelian logic. Consider finitely many classes of\/ $\logic{L}$-algebras $\K_1,\K_2,\dots, \K_n \subseteq \ralg{L}$. Then, the following are equivalent:
		
\begin{enumerate}
\item $\logic{L}$ is finitely strongly complete w.r.t.\ $\bigcup_{i=1}^n \K_i$.
\item  $\lalg{L} \subseteq \I(\oa) \cup \bigcup_{i \leq n} \ISPU(\K_i)$.
\end{enumerate}
\end{cor}

Thanks to~\cite{Khisamiev:UniversalTheoryAbelianGroups}, we know the quasivariety of Abelian $\ell$-groups is generated (as a quasivariety) by $\Z$ and, since $\Z$ can be embedded into any non-trivial Abelian $\ell$-group, we obtain the following corollary.

\begin{cor} \label{c:Abel completeness}
The variety $\mathbb{AL}$ of Abelian $\ell$-groups is generated (as a quasivariety) by any of its  non-trivial members, that is, $\ISPPU(\alg{A})=\mathbb{AL}$ for any non-trivial $\alg{A} \in \mathbb{AL}$. Therefore, $\mathbb{AL}$ has no non-trivial subquasivarieties.
\end{cor}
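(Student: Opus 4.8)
The plan is to derive the corollary from the cited theorem of Khisamiev, which gives that $\Z$ generates $\mathbb{AL}$ as a quasivariety, i.e.\ $\ISPPU(\Z)=\mathbb{AL}$. It then suffices to show that, for an arbitrary non-trivial $\alg{A}\in\mathbb{AL}$, the generator $\Z$ already lies in $\ISPPU(\alg{A})$; combining this with the idempotency and monotonicity of the closure operator $\ISPPU(\cdot)$ on classes of similar algebras forces $\ISPPU(\alg{A})=\mathbb{AL}$, and the non-existence of non-trivial subquasivarieties is then immediate.

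For the key ingredient I would check that $\Z$ embeds, as an $\ell$-group, into every non-trivial Abelian $\ell$-group $\alg{A}$. Picking $a\in A$ with $a\neq 0$ and using the standard identity $a=(a\lor 0)-((-a)\lor 0)$, at least one of the (non-negative) elements $a\lor 0$ and $(-a)\lor 0$ must be strictly positive; call it $b$. Strict monotonicity of addition then yields $0<b<2b<3b<\cdots$, so $b$ has infinite order and, for integers $n,m$, $nb\leq mb$ iff $n\leq m$. Hence the cyclic subgroup $\{nb\mid n\in\Z\}$ is totally ordered by the induced order — so there $\lor$ and $\land$ are $\max$ and $\min$ — and $n\mapsto nb$ is an $\ell$-group isomorphism of $\Z$ onto it. Since $\ISPPU(\alg{A})$ is closed under isomorphic copies of subalgebras, $\Z\in\ISPPU(\alg{A})$.

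Finally, as $\mathbb{AL}$ is a quasivariety we have $\ISPPU(\alg{A})\subseteq\mathbb{AL}$, and the closure-operator properties give
$$\mathbb{AL}=\ISPPU(\Z)\subseteq\ISPPU(\ISPPU(\alg{A}))=\ISPPU(\alg{A})\subseteq\mathbb{AL},$$
so $\ISPPU(\alg{A})=\mathbb{AL}$. For the concluding sentence, any subquasivariety $\mathbb{K}$ of $\mathbb{AL}$ containing a non-trivial algebra $\alg{A}$ satisfies $\mathbb{K}\supseteq\ISPPU(\alg{A})=\mathbb{AL}$, hence $\mathbb{K}=\mathbb{AL}$; thus the only subquasivarieties of $\mathbb{AL}$ are $\I(\oa)$ and $\mathbb{AL}$ itself. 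I expect no genuine obstacle here: all the substance is imported from Khisamiev's result, and the sole step requiring an argument, the embedding of $\Z$, is just the familiar fact that non-trivial ordered abelian groups are torsion-free, already announced in the introduction.
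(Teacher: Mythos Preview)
Your proposal is correct and follows exactly the paper's approach: Khisamiev's theorem gives $\ISPPU(\Z)=\mathbb{AL}$, and since $\Z$ embeds into every non-trivial Abelian $\ell$-group one gets $\ISPPU(\alg{A})=\mathbb{AL}$ for any such $\alg{A}$. The only difference is that you actually spell out the embedding of $\Z$ (via a positive element and torsion-freeness), whereas the paper simply cites this as a known fact without proof.
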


As we will see in the next section, $\mathbb{AL}$ is not generated as a \emph{generalized} quasivariety by any of $\R$, $\Q$, or $\Z$, and it admits numerous non-trivial generalized subquasivarieties.

In logical terminology, we know that \Ab\ is finitely strongly complete w.r.t.\ any non-trivial Abelian $\ell$-group and has no consistent finitary extension, though it has numerous proper consistent infinitary extensions. Moreover, it is not strongly complete w.r.t.\ any of $\R$, $\Q$ or $\Z$. In the next section, we will axiomatize its proper extension strongly complete w.r.t.\ $\R$.

\section{Infinitary extensions of Abelian logic}\label{s:InfAbel}

Having established that Abelian logic ($\Ab$) lacks non-trivial finitary extensions, a well-known result that motivates our next step, we now turn our attention to the much richer landscape of its infinitary extensions. This section explores this domain by introducing infinitary rules designed to capture specific semantic properties.

Clearly, the most prominent candidates for such infinitary logics are ${\vDash_\R}$, ${\vDash_\Q}$, and ${\vDash_\Z}$. We know that:
$$
\Ab \subseteq {\vDash_\R} \subseteq {\vDash_\Q}  \subseteq {\vDash_\Z} 
$$
and that above ${\vDash_\Z}$ there is only the inconsistent logic (because $\Z$ obviously generates the smallest non-trivial generalized subquasivariety of Abelian $\ell$-groups). 

In this section, we obtain the following new results: 
\begin{itemize}
\item strictness of all three inclusions, and thus these three logics are indeed not finitary (Example~\ref{e:ArchNonArch}, Corollary~\ref{c:manyEXT} and Theorem~\ref{t:IDC})
\item there are $2^{2^\omega}$ distinct infinitary logics between ${\vDash_\R}$ and  ${\vDash_\Q}$ (Corollary~\ref{c:manyEXT}) 
\item an axiomatization of the logic $\vDash_\R$ (Theorem~\ref{t:realAbel}).
\end{itemize}
Moreover, we give a conjecture for an axiomatization of the logic $\vDash_\Z$, give strong arguments to support it, and point to the impossibility of proving it by usual methods (Conjecture~\ref{c:conjecture}).

From now on, we will use the following notation: for $n \in \N$ and a formula $\f$, we write $n \cdot \f$ instead of $\underbrace{\f \conj \cdots \conj \f}_{\text{$n$ times}}$ (we also set $0 \cdot \f = \tr$). We use the analogous notation for $+$ in the algebraic setting. 

Let us first recall the infinitary rule used to axiomatize the infinitary {\L}ukasiewicz logic (see~\cite{Hay:Axiomatization}):
\begin{equation*} \tag{Hay}
    \{\neg \f \imp n \cdot \f \mid n \in \N  \} \wdash \f
\end{equation*}
Since the language of $\Ab$ does not include a negation with similar properties to that of {\L}ukasiewicz logic, we have to replace $\neg \f$ by $\p$ and define:
\begin{equation*} \tag{Arch}
    \{\p \imp n \cdot \f \mid n \in \N  \} \wdash \f
\end{equation*}

As its name suggests, the rule \rul{Arch} is intended as a syntactic counterpart to the well-known algebraic property of being Archimedean. We introduce the standard definition of this property for Abelian $\ell$-groups.

\begin{defn}
Let $\A$ be an Abelian $\ell$-group. We say $\A$ is Archimedean if and only if for each $a,b \in A$ it holds:
$$(\forall n \in \mathbb N) \; na\leq b \implies  a \leq 0.$$
\end{defn}

The following lemma makes the connection between the rule and the pro\-perty precise, showing that the \rul{Arch} rule is valid in an Abelian $\ell$-group if and only if that group is Archimedean.

\begin{lemma}
Let $\A$ be an Abelian $\ell$-group. Then, the following conditions are equivalent:

\begin{enumerate}
\item $\A$ is Archimedean.
\item \rul{Arch} is valid in $\A$.
\end{enumerate}

Furthermore, if $\A$ is linearly ordered, we can add the following condition:
\begin{enumerate}\setcounter{enumi}{2}	
\item \rul{Arch}$^\lor$ is valid in $\A$.
\end{enumerate}
\end{lemma}

\begin{proof}
We first show the implication from 1 to 2.
Let $e$ be an evaluation such that we have $e(\f) \ngeq 0$. We set $a=-e(\f)$ and $b=-e(\p)$.
Since $\A$ is Archimedean and $a\nleq 0$, there is an $n \in \N$ such that $n \cdot a \nleq b$. Consequently, there is an $n \in \N$ such that
$n \cdot e(\f)  \ngeq e(\p) $; thus $e(\p \imp n \cdot \f) \ngeq 0$. This shows that \rul{Arch} holds.

To show the implication from 2 to 1, we assume that $\A$ is not Archimedean. Therefore, there exist $a,b \in A$ such that $a \nleq 0$ and $n \cdot a \leq b$ for each $n \in \N$. Consider an evaluation $e$ such that $e(\f)=-a$ and $e(\p)=-b$. Then, for each $n \in \N$, we have $n \cdot e(\f) \geq e(\p)$ and thus $e(\p \imp n \cdot \f) \geq 0$ for each $n \in \N$. However, by assumption, $e(\f)=-a \ngeq 0$ and thus \rul{Arch} fails in $\A$.

The rest of the statement follows from Lemma~\ref{l:about nabla-forms}.
\end{proof}

By the next example, we know that \rul{Arch} is not valid in $\Ab$ and so $\Ab$ is strictly contained in $\vDash_\R$ and thus $\vDash_\R$ cannot be finitary.

\begin{example}\label{e:ArchNonArch}
Clearly the algebras $\R$, $\Q$, and $\Z$ are Archimedean. 
As an example of a non-Archimedean linearly ordered $\ell$-group, consider the lexicographic product of two copies of the integers $\Z \times_{\text{lex}} \Z$, where $\Z \times_{\text{lex}} \Z$ has the same group structure as $\Z \times \Z$, with the ordering $\tuple{a,b} > \tuple{c,d}$ iff $a>c$ or $a=c$ and $b >d$.
This $\ell$-group is clearly not Archimedean, since by definition $n \cdot \tuple{0,1}<\tuple{1,0}$ for each $n \in \N$.
\end{example}

Let us call the logic $\vDash_\R$, the \emph{real Abelian logic} and denote it $\rAb$. The next theorem shows how to axiomatize it.

\begin{thm}\label{t:realAbel}
$\rAb$ is the extension of $\Ab$ by the rule $\rul{Arch}^\lor\!$.
\end{thm}

\begin{proof}
Let us denote the logic $\Ab+\rul{Arch}^\lor\!$ by $\logic{L}$. First note that, by Corollary~\ref{c:semilinearity_criterium}, we know that $\logic{L}$ is a semilinear logic and so it is strongly complete with respect to the class $\lalg{L}$, which, due to the previous lemma, consists of linearly ordered Archimedean $\ell$-groups. By Hölder's Theorem~\cite{Holder:Axiome}, each such $\ell$-group is embeddable into $\R$ and so the proof follows.
\end{proof}

Note that this proof relies on the $\lor$-form of the rule \rul{Arch} which is necessary to ensure that the resulting logic is semilinear. Therefore, it remains an open question whether the rule $\rul{Arch}$ itself would suffice on its own to axiomatize $\rAb$, which is equivalent to the question whether $\rul{Arch}^\lor$ is valid in $\Ab + \rul{Arch}$.

Next, we show that $\vDash_{\Q}$ is strictly stronger than $\vDash_\R$ (in other words, $\rAb$ is not strongly complete w.r.t.\ $\Q$).

We use the following convention. For an $\ell$-group $\A$, elements $a,b \in A$ and $\frac{k}{l} \in \mathbb Q \setminus \{0\}$ we write $\frac{k}{l} \cdot a \leq b$ or $a \leq \frac{l}{k} \cdot b$ when $k \cdot a \leq l \cdot b$.
Similarly, we write $\frac{k}{l} \cdot \f \imp \psi$ or $\f \imp \frac{l}{k} \cdot \psi$ instead of $k \cdot \f \imp l \cdot \psi$.

For each $0<\gamma \in \R \setminus \Q$, we introduce the following rule, in which $\tuple{q_j}_{j \in \N}$ is a strictly increasing sequence of rationals converging to $\gamma$ and $\tuple{r_j}_{j \in \N}$ is a strictly decreasing sequence of rationals converging to $\gamma$:
\begin{equation} \tag{str$_\gamma$}
    \{q_j \cdot \f \imp \p, \p \imp r_j \cdot \f \mid j \in \N\} \wdash  \f \imp \tr.
\end{equation}

The next proposition shows that in our context \rul{str_\gamma} does not depend on the choice of the sequences.

\begin{prop} \label{p:irrational}
Given $0<\gamma \in \R \setminus \Q$, a sequence of rationals $\tuple{q_j}_{j \in \N}$ strictly increasing and converging to $\gamma$, and a sequence of rationals $\tuple{r_j}_{j \in \N}$ strictly decreasing and converging to $\gamma$,
the logic $\rAb+\rul{str_\gamma}^\lor$ is strongly complete w.r.t.\ 
$$\K=\{\A \subseteq \R \mid (\forall a,b \in A) \;  (b=  \gamma \cdot a \implies a=0) \}.$$
Moreover, for any $\A \in \IS(\R)$ we have $\A \vDash \rul{str_\gamma}^\lor$ if and only if $\A \in \K$.
\end{prop}

\begin{proof}
    Since the logic $\rAb+\rul{str_\gamma}^\lor$ satisfies the conditions from Corollary~\ref{c:semilinearity_criterium}, it is semilinear. Since $\rAb+\rul{str_\gamma}^\lor$ is an extension of $\rAb$, we obtain that $\rAb+\rul{str_\gamma}^\lor$ is strongly complete w.r.t.\ some subclass of $\S(\R)$.

    We need the following technical observation:
    \begin{obs}
       Let $\A \subseteq \R$ and $a,b \in A$.
       Then we have:
       $$ q_j \cdot a \leq b \text{ and } b \leq r_j \cdot a \text{ for each } j \in \N \iff b=\gamma \cdot a \text{ and }a \geq 0.$$
    \end{obs}
    \begin{proofmiddle}
        First, since $q_0 \cdot a \leq b \leq r_0 \cdot a$ and $q_0 < r_0$, we can conclude that $0 \leq a$. 
        We have $\lim q_j=\gamma$ and $\lim r_j=\gamma$. Consequently, $\gamma \cdot a=\lim q_j \cdot a \leq b \leq \lim r_j \cdot a=\gamma \cdot a$. Therefore, we obtain $\gamma \cdot a=b$. The other direction follows as well: if $\gamma \cdot a=b$ and $a \geq 0$, then $q_j \cdot a \leq \gamma \cdot a=b$ for each $j \in \N$ (since $a \geq 0$ and $q_j$ is strictly increasing) and $b=\gamma \cdot a \leq r_j \cdot a$ for each $j \in \N$ (since $a \geq 0$ and $r_j$ is strictly decreasing).
    \end{proofmiddle}
  
First, let us take an arbitrary $\A \in \mathbb K$.
     We show that $\A$ satisfies $\rul{str_\gamma}^\lor$. 
     By the observation and the definition of $\K$, if, for $a,b \in A$, we have $q_j \cdot a \leq b$ and $b \leq r_j \cdot a$ for each $j \in \N$, then it must be that $a=0$. Thus, any evaluation in $\A$ that satisfies the premise of $\rul{str_\gamma}$ must map $\f$ to $0$, thereby satisfying the conclusion $\f \imp \tr$. Therefore, $\rul{str_\gamma}$ is valid in $\A$. Since $\A$ is a chain, by Lemma~\ref{l:about nabla-forms} $\rul{str_\gamma}^{\lor}$ is also valid in $\A$, and thus $\A$ is a model of $\rAb+\rul{str_\gamma}^\lor$.

    To prove the other direction, assume for an algebra $\oa \neq \A \subseteq \R$ that there exist $a,b \in A$ such that $b=\gamma \cdot a$ and $a \neq 0$.
    Clearly,  $\gamma \cdot (-a)=-b$ holds as well. Since $\A$ is totally ordered, we can assume without loss of generality that $a > 0$.
    By the observation, this implies that for each $j \in \N$, $q_j \cdot a \leq b$ and $b \leq r_j \cdot a$. But since $a,b \in A$ and $a > 0$, this is a counterexample to the validity of rule $\rul{str_\gamma}$ in $\A$. By Lemma~\ref{l:about nabla-forms}, $\rul{str_\gamma}^\lor$ fails in $\A$ as well. This shows $\alg A$ is not a model of $\rAb+\rul{str_\gamma}^\lor$.
\end{proof}

Clearly, for $\gamma_1,\gamma_2 \in \R \setminus \Q$, the rules $\rul{str_{\gamma_1}}^\lor$ and $\rul{str_{\gamma_2}}^\lor$ do not necessarily define different classes of models. For example, one can easily show that $\rul{str_{\sqrt{2}}}^\lor$ is valid in an algebra $\A$ if and only if $\rul{str_{2\sqrt{2}}}^\lor$ is valid in $\A$. Nevertheless, one can show the following.

\begin{cor}\label{c:manyEXT}
There are $2^{2^\omega}$ infinitary extensions of Abelian logic between $\vDash_\R$ and $\vDash_\Q$.
\end{cor}

\begin{proof}
It is a well-known result that the transcendence degree of $\R$ over $\Q$ is $2^\omega$. From this, it follows that there are $2^{2^\omega}$ distinct subfields of $\R$ that are field extensions of $\Q$. We argue that the $\ell$-group reduct of each such field is a model of a different infinitary extension of $\Ab$.
Let $\A$ and $\B$ be the $\ell$-group reducts of two distinct subfields of $\R$ that are field extensions of $\Q$. Without loss of generality, assume that there is a $\gamma \in A \setminus B$ such that $0 < \gamma$. By Proposition~\ref{p:irrational}, an $\ell$-subgroup of $\R$ validates the rule $\rul{str_\gamma}^\lor$ if and only if $b=\gamma \cdot a \implies a=0$ for all $a,b$ in the corresponding $\ell$-subgroup. Since $\alg A$ and $\alg B$ are the $\ell$-group reducts of subfields of $\R$, this condition holds if and only if $\gamma$ is not an element of their respective universes.
Hence we obtain that the rule $\rul{str_\gamma}^\lor$ is valid in $\B$ but is not valid in $\A$. This shows that $\vDash_\A$ and $\vDash_\B$ are different logics.
\end{proof}

Note that Proposition~\ref{p:irrational} also hints at a possible axiomatization of the logic $\vDash_\Q$. Indeed we can easily observe that $\vDash_\Q$ is axiomatized as $ \rAb+\{\rul{str_\gamma}^\lor \mid \gamma \in \mathbb R \setminus \mathbb Q\}$ iff this logic is semilinear (which we unfortunately do not know, as Corollary~\ref{c:semilinearity_criterium} works for countably axiomatized logics only). Therefore, the axiomatization of $\vDash_\Q$ remains as an open problem.

Finally, we show that $\vDash_\Z$ is strictly stronger than $\vDash_\Q$.  To this end, we propose the following infinitary rule IDC (standing for ``infinite decreasing chain''),
\begin{equation*} \tag{IDC}
\{ 2\cdot \f_{n+1} \imp \f_{n}, \f_n \imp 4 \cdot \f_{n+1} \mid n \in \N\} \wdash \f_0 \imp \tr.
\end{equation*}
and prove the following theorem:

\begin{thm}\label{t:IDC}
Let $\A$ be a non-trivial linearly ordered Archimedean Abelian $\ell$-group. Then \rul{IDC} is valid in $\A$ iff $\A$ is isomorphic to $\Z$. 
\end{thm}

Before we prove this theorem, let us state a conjecture based on this result.\footnote{Note that, by Example~\ref{ex:independence of Arch} we know that the rule $\rul{Arch}^\vee$ is not redundant.}

\begin{con}\label{c:conjecture}
The logic $\vDash_\Z$ is the extension of $\Ab$ by the rules $\rul{Arch}^\lor\!$ and $\rul{IDC}^\vee$.
\end{con}

Observe that the conjecture holds if and only if $\Ab+\rul{IDC}^\vee+\rul{Arch}^\vee$ is a semilinear logic. Unfortunately, the rule $\rul{IDC}$ uses infinitely many variables and thus, while we know that $\vee$ is a strong disjunction in $\Ab+\rul{IDC}^\vee+\rul{Arch}^\vee$ by Corollary~\ref{c:semilinearity_criterium}, we do not know whether this logic is actually a semilinear logic.

This is, however, a systematic problem as \emph{any} rule valid in $\vDash_\Z$ but not in $\vDash_\Q$ \emph{has to} use infinitely many variables. Indeed, since any finitely generated subalgebra of $\Q$ is isomorphic to $\Z$, any rule with finitely many variables valid in $\vDash_\Z$ must also be valid in $\vDash_\Q$.

Let us now proceed towards the proof of Theorem~\ref{t:IDC}. First, we need to introduce a machinery of decreasing sequences (which justifies the name of the rule).

\begin{defn}
Let $\A$ be an Abelian $\ell$-group. A sequence ${\tuple{a_i}}_{i \in \N}$ of elements of $A$ is \emph{strongly decreasing} if, for each $i \in \N$, there is an $n_{i} \geq 4$ such that:
$$
n_{i} \cdot a_{i+1} \geq  a_i \geq 2 \cdot a_{i+1}. 
$$
We call the sequence {\em trivial} if $a_i = 0$ for each $i$.
\end{defn}

Let us note that any non-trivial strongly decreasing sequence in $\alg{A}$ is indeed decreasing and $a_i \geq 0$ for each $i \geq 1$. The latter claim follows from the fact that for each $i$ there is an $n_i\geq 4$ such that $n_i \cdot a_i \geq 2n_i \cdot a_{i+1} \geq 2 \cdot a_i$, thus $(n_i-2) \cdot a_i \geq 0$, from which one can derive $a_i \geq 0$ (since in $\Z$ for any $n \geq 1$ the quasiequation $n \cdot x\geq 0 \implies  x\geq 0$ is valid, it holds in $\alg{A}$). The former claim then follows from the validity of $x \geq 0 \implies 2 \cdot x \geq x$ in $\Z$.

\begin{lemma} \label{l:strongly,strictly-decreasing}
Let ${\tuple{a_i}}_{i \in \N}$ be a non-trivial sequence of elements of an Abelian $\ell$-group $\A$. If ${\tuple{a_i}}_{i \in \N}$ is strongly decreasing, then it is strictly decreasing. 
If $\alg{A}$ is an Archimedean chain and it has a strictly decreasing sequence of positive elements, then it also has a non-trivial strongly decreasing sequence.
\end{lemma}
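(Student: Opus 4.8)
The plan is to prove the two implications separately, using throughout the two facts observed just before the statement: every strongly decreasing sequence is decreasing, and satisfies $a_i\geq 0$ for all $i\geq 1$.

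For the first implication I would argue contrapositively: assume $\tuple{a_i}_{i\in\N}$ is strongly decreasing but not strictly decreasing, and show it must be trivial. Since the order is linear and the sequence is decreasing, there is some $i$ with $a_i=a_{i+1}$. From $a_i\geq 2\cdot a_{i+1}$ one then gets $a_{i+1}\geq 2\cdot a_{i+1}$, i.e.\ $a_{i+1}\leq 0$; combined with $a_{i+1}\geq 0$ (valid because $i+1\geq 1$) this forces $a_{i+1}=0$, hence also $a_i=0$. This pair of zeros now propagates in both directions: going downwards, the inequalities $n_{j-1}\cdot a_j\geq a_{j-1}\geq 2\cdot a_j$ with $a_j=0$ squeeze $a_{j-1}$ to $0$; going upwards, $a_j\geq 2\cdot a_{j+1}$ with $a_j=0$ gives $2\cdot a_{j+1}\leq 0$, hence $a_{j+1}\leq 0$ in the ordered group, and together with $a_{j+1}\geq 0$ we again get $a_{j+1}=0$. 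Thus $a_j=0$ for every $j$, contradicting non-triviality.

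For the second implication, let $\tuple{b_i}_{i\in\N}$ be the given strictly decreasing sequence of positive elements and put $e_i:=b_i-b_{i+1}$, so each $e_i>0$ and $\sum_{j=k}^{\ell-1}e_j=b_k-b_\ell<b_k$ whenever $k<\ell$ (since $b_\ell>0$). The crucial claim is that for every $k$ there is some $i>k$ with $2\cdot e_i\leq e_k$: otherwise $2\cdot e_i>e_k$ for all $i>k$, and then for every $m$ we would have $m\cdot e_k<2\sum_{j=k+1}^{k+m}e_j=2(b_{k+1}-b_{k+m+1})<2\cdot b_{k+1}$, contradicting the Archimedean property of $\A$. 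Using this, I would define recursively $i_0:=0$ and let $i_{k+1}$ be some index $>i_k$ with $2\cdot e_{i_{k+1}}\leq e_{i_k}$. Finally, for each $k$ the Archimedean property yields some $m_k$ with $m_k\cdot e_{i_{k+1}}\geq e_{i_k}$; putting $n_k:=\max\{m_k,4\}\geq 4$ gives $n_k\cdot e_{i_{k+1}}\geq e_{i_k}\geq 2\cdot e_{i_{k+1}}$. Hence $\tuple{e_{i_k}}_{k\in\N}$ is strongly decreasing, and it is non-trivial because all its terms are positive.

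The routine parts are the propagation arguments and the elementary rearrangements of inequalities. The genuinely delicate point is in the second implication: one cannot thin out $\tuple{b_i}$ itself to a strongly decreasing sequence (if the $b_i$ approach a positive limit, no two of them have ratio at least $2$), so one must pass to the gap sequence $\tuple{e_i}$ and exploit that its partial sums stay bounded by a $b_k$ in order to make the Archimedean hypothesis bite.
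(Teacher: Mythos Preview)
Your argument for the first implication follows the paper's line almost exactly; the one slip is that you invoke linearity of the order (``Since the order is linear\dots''), but the first part of the lemma is stated for an arbitrary Abelian $\ell$-group, and linearity is neither assumed nor needed there. The conclusion you draw is still correct: once you know the sequence is decreasing, ``not strictly decreasing'' simply means $a_i\geq a_{i+1}$ together with $a_i\not> a_{i+1}$ for some $i$, which forces $a_i=a_{i+1}$ by antisymmetry. After that, your propagation in both directions matches the paper's.

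For the second implication your route is genuinely different from the paper's, and arguably cleaner. The paper first invokes H\"older's theorem to assume $\A\subseteq\R$, then builds the strongly decreasing sequence by a Euclidean-division argument: given $b_n$, it finds some $a_j$ with $a_j/b_n\notin\Z$, takes the remainder $r=a_j-k\cdot b_n\in(0,b_n)$, and sets $b_{n+1}=r\wedge(b_n-r)$, which lies in $(0,b_n/2]$. Your proof instead stays intrinsic to the Archimedean chain: you pass to the gap sequence $e_i=b_i-b_{i+1}$, use boundedness of the telescoping partial sums to force (via the Archimedean property) the existence of arbitrarily small gaps in the sense $2e_i\leq e_k$, and then thin out to a subsequence. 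This avoids H\"older altogether and makes transparent exactly where the Archimedean hypothesis is used (twice: once for the halving step, once for the upper bound $n_k$). The paper's approach buys concreteness by working in $\R$; yours buys a self-contained argument that would transplant more readily to related settings.
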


\begin{proof}
To prove the first claim, it suffices to show the strictness. Assume that $a_i=a_{i+1}$ for some $i \in \N$. Then the inequality $a_i \geq 2 \cdot a_{i+1}$ becomes $a_i \geq 2 \cdot a_i$ and thus $0 \geq a_i.$ Since also $a_i \geq 0$, we obtain $a_i=0$. For each $j > i$ we have $a_i \geq a_j \geq 0$, thus we get $a_j=0$ as well. Similarly, for $j < i$, there is an $n \in \N$ such that $n \cdot a_i \geq a_j \geq 0$ and thus again $a_j=0$. Therefore, the sequence ${\tuple{a_i}}_{i \in \N}$ has to be trivial, a contradiction.
    
We will prove the second claim. Let us assume that $\A$ is an Archimedean chain and ${\tuple{a_i}}_{i \in \N}$ is a strictly decreasing sequence of positive elements. Without loss of generality, as $\A$ is Archimedean, we can assume that $\A \subseteq \R$, by Hölder's Theorem.
We construct a non-trivial strongly decreasing sequence ${\tuple{b_i}}_{i \in \N}$. Set $b_0 > 0$ arbitrarily. We will define the rest of the sequence recursively. Assume that we have already constructed $\tuple{b_i}_{i \leq n}$ satisfying the conditions from the definition of a strongly decreasing sequence. Now we want a $b_{n+1}$ such that $0< 2 \cdot b_{n+1}\leq b_n$. 

Since ${\tuple{a_i}}_{i \in \N}$ is a strictly decreasing sequence, it follows that there exists a $j \in \N$ such that $\frac{a_j}{b_n} \notin \mathbb Z$ (otherwise $\tuple{\frac{a_j}{b_n}}_{j \in \N}$ would be a strictly decreasing sequence of positive elements in $\Z$, which is not possible). 

Since $\A$ is Archimedean, there is a $k \in \N$ such that $0 \leq a_j -k \cdot b_n < b_n$. Since $\frac{a_j}{b_n} \notin \mathbb Z$, we have $0 \neq a_j -k \cdot b_n$ and thus $0 < a_j -k \cdot b_n < b_n$.
Hence, $$0 < a_j -k \cdot b_n \leq \frac{b_n}{2} \; \text{ or } \; 0 < b_n-(a_j -k \cdot b_n) \leq \frac{b_n}{2}.$$ 
Therefore, we set$$b_{n+1}=\min\{a_j -k \cdot b_n, b_n-(a_j -k \cdot b_n)\}.$$ Clearly, $b_{n+1}>0$ and $2 \cdot b_{n+1}\leq b_n$. The second condition for strong decreasing sequences easily follows from the fact that $\A$ is Archimedean. Thus, ${\tuple{b_i}}_{i \in \N}$ is a non-trivial strongly decreasing sequence.
\end{proof}

\begin{example} \label{ex:independence of Arch}
We will show that there are non-Archimedean chains with strictly decreasing sequences of positive elements and with no non-trivial strongly decreasing sequence. Consider the $\ell$-group $\Z \times_{\text{lex}} \Z$ from Example~\ref{e:ArchNonArch}. 
First, let us consider the sequence ${\tuple{\tuple{1,-i}}}_{i \in \N}$. Clearly, this sequence is strictly decreasing and $\tuple{1,-i} >0$ for each $i \in \N$. We show that there is no non-trivial strongly decreasing sequence in $\Z \times_{\text{lex}} \Z$.

Let us assume that ${\tuple{\tuple{a_i,b_i}}}_{i \in \N}$ is a strongly decreasing sequence. Clearly $a_i \geq 0$ for all $i \in \N$ since $\tuple{a_i,b_i} \geq 0$. Thus, there has to be a $c \geq 0$ such that $a_i=c$ for infinitely many $i \in \N$. Therefore, there has to be a strongly decreasing subsequence ${\tuple{\tuple{c,d_i}}}_{i \in \N}$, where $d_i \in \mathbb Z$ and $c \geq 0$. Fix an arbitrary $i \in \N$. We know that $2 \cdot \tuple{c,d_{i+1}} \leq \tuple{c,d_{i}}$, thus $2\cdot c \leq c$. This implies $c=0$. 
    
Since ${\tuple{\tuple{0,d_i}}}_{i \in \N}$ is a strongly decreasing sequence in $\Z \times_{\text{lex}} \Z$, the sequence $\tuple{d_{i}}$ is strongly decreasing in $\Z$. Therefore, $d_i=0$ for all $i \in  \N$. This shows that ${\tuple{\tuple{c,d_i}}}_{i \in \N}$ has to be the trivial sequence and so is ${\tuple{\tuple{a_i,b_i}}}_{i \in \N}$ as well.
\end{example}

\begin{prop}
Let $\A$ be an Abelian $\ell$-group. Then \rul{IDC} is valid in $\A$ iff only the trivial sequence is strongly decreasing in $\A$.
\end{prop}

\begin{proof}

First, assume that $\A$ has a non-trivial strongly decreasing sequence ${\tuple{a_i}}_{i \in \N}$ and note that w.l.o.g.\ we can assume that $n_i = 4$ for each $i$ (if for some $i$ we have $a_i > 4\cdot a_{i+1}$, expand the sequence by putting $2\cdot a_{i+1}$ in a proper place, and repeat \dots). Let us consider the $\A$-evaluation $e$ induced by setting $e(x_i) = a_i$. Clearly, all the premises of \rul{IDC} are valid but we have $e(x_0 \imp \tr) = - a_0 < 0$ and thus the rule \rul{IDC} is not valid in $\A$.

Conversely, assume that the rule \rul{IDC} is not valid in $\A$, i.e., we have an evaluation $e$ such that if we set $e(\f_i) = a_i$ the validity of premises of \rul{IDC} tells us that $4 \cdot a_{i+1} \geq a_{i} \geq 2 \cdot a_{i+1}$, i.e., ${\tuple{a_i}}_{i \in \N}$ is a strongly decreasing sequence and, by the failure of the conclusion of the rule, we have $a_0 > 0$, i.e., the sequence is non-trivial.
\end{proof}

The previous proposition gives us a description of Abelian $\ell$-groups satisfying \rul{IDC} in terms of the existence of strongly decreasing sequences.
Lemma~\ref{l:strongly,strictly-decreasing} shows that there is a connection between strictly decreasing sequences of positive elements and strongly decreasing sequences. Therefore, it is natural to ask whether the previous proposition could be stated in terms of the existence of {\em strictly} decreasing sequences of positive elements. The following example shows that this is not the case.

\begin{example}
There exist $\ell$-groups in $\P(\Z)$ which have strictly decreasing sequences of positive elements but lack a non-trivial strongly decreasing sequence.

Consider the Abelian $\ell$-group $\Z^\omega$ and note that the sequence of positive elements $$
\tuple{\tuple{\underbrace{0,\dots,0}_{\text{i-times}},1,1,\dots}}_{i \in \N}
$$
is strictly decreasing. However, the previous proposition clearly states that only the trivial sequence is strongly decreasing in 
$\Z^{\omega}$, since $\Z^{\omega} \in \P(\Z)$, $\P(\Z) \subseteq \ISP_\omega(\Z)$, and \rul{IDC} holds in $\ISP_\omega(\Z)$.
\end{example}

Now we are finally ready to give the promised proof of Theorem~\ref{t:IDC}.

\begin{proof}[Proof of Theorem~\ref{t:IDC}]  

Let $\A$ be a non-trivial linearly ordered Archimedean Abelian $\ell$-group. We want to show that \rul{IDC} is valid in $\A$ if and only if $\A$ is isomorphic to $\Z$.
By the previous proposition, \rul{IDC} is valid in $\A$ if and only if $\A$ has only the trivial strongly decreasing sequence.

Since $\A$ is an Archimedean chain, by Lemma~\ref{l:strongly,strictly-decreasing}, having only the trivial strongly decreasing sequence is equivalent to having no strictly decreasing sequence of positive elements.

Thus, it suffices to show that a non-trivial linearly ordered Archimedean $\ell$-group $\A$ has no strictly decreasing sequence of positive elements if and only if $\A \cong \Z$.

Clearly, $\Z$ has no decreasing sequences of positive elements. To show the other implication, one has to argue that $\Z$ is the only non-trivial linearly ordered Archimedean $\ell$-group with no strictly decreasing sequences of positive elements. We can assume, by Hölder's Theorem, that without loss of generality, $\Z \subseteq \A \subseteq \R$ and $\A$ is not isomorphic to $\Z$. We have to distinguish two cases.
\begin{enumerate}

\item Let $\zeta \in\mathbb R\setminus \mathbb Q$ be an element of $A$. Without loss of generality we can assume $\zeta>1$. Since $1 \in A$, we can generate recursively the following sequence. We start with $r_0=\zeta$ and $r_1=1$. For $n> 0$, we define $k_n=\lfloor \frac{r_{n-1}}{r_n} \rfloor$ and let $r_{n+1}=r_{n-1}-k_n \cdot r_n$. 
By the properties of division, $0 \leq r_{n+1}<r_n$ for all 
$n \geq 1$. This means that the sequence $\tuple{r_n}_{n \in \N}$ is strictly decreasing. Furthermore, because $\zeta$ is irrational, the ratio $\frac{r_n}{r_{n+1}}$ is irrational for each $n \in \N$. This shows that this algorithm never terminates with a zero remainder; thus, $r_n>0$ for all $n$.

\item $\A \subseteq \mathbb \Q$. Since $\A$ is not isomorphic to $\Z$, $\A$ is not generated by any single element. Thus, for each $0<a_n \in A$, there is a $0<b_n \in A$ such that $a_n$ does not generate $b_n$. If $b_n<a_n$ we set $a_{n+1}=b_n$; otherwise, we set $k_n=\lfloor \frac{b_{n}}{a_n} \rfloor$ and 
$a_{n+1}=b_n- k_n \cdot a_n$. Thus, we have constructed $0<a_{n+1}<a_n$.
This allows us to build the desired sequence, and therefore we finish the proof.\qedhere
\end{enumerate}
\end{proof}

\section{Pointed Abelian logic}\label{s:PointedAbel}

In the previous sections, we have explored the landscape of Abelian logic and its infinitary extensions. We now turn our attention to expansions, that is, we allow not only new rules, but also enriched logical languages. 
This section introduces \emph{pointed Abelian logic} (\pAb), which extends Abelian logic with a new constant symbol \fa. 
This seemingly simple addition of an additional ``point'' to the algebraic semantics dramatically alters the character of the logic. Whereas Abelian logic has no non-trivial finitary extensions, we will show that this pointed version gives rise to a plentiful landscape of logical extensions.\footnote{All the semilinear extensions of $\pAb$ are characterized in the submitted article \cite{Jankovec:SubvarietiesPointedAbelianLGroups}.} The central result of this section is a completeness theorem demonstrating that this new logic is finitely strongly complete with respect to just three canonical pointed structures over the real numbers: $\R_{-1}, \R_0$, and $\R_1$.

Let us consider the expansion of the language $\lang{L_\Ab}$ of Abelian logic by a new constant symbol $\fa$. We denote this language by $\lang{L_\pAb}$. In this section, we study the minimal expansion of $\Ab$ in this language.

\begin{defn}
The pointed Abelian logic is the least expansion of $\Ab$ in the language $\lang{L_\pAb}$,
i.e., it is axiomatized by the same axiomatic system as $\Ab$ only closed under substitution of the expanded language.
\end{defn}

Recall that Full Lambek logic with exchange, $\FL_\ae$, is a prominent substructural logic in the language of $\lang{L_\pAb}$ which is axiomatized by omitting the axiom \rul{rel} from the axiomatic system given in Table~\ref{tab:Abel}. Thus, pointed Abelian logic is the extension of $\FL_\ae$ by axiom $\rul{rel}$. 

Interestingly, we can see $\Ab$ as an extension of $\pAb$ by the axiom which collapses both constants:
$$
(\fa\to\tr)\wedge(\tr\to\fa).
$$
Therefore, sometimes in the literature $\Ab$ is presented as the expansion of $\FL_\ae$ by axioms $\rul{rel}$ and $(\fa\to\tr)\wedge(\tr\to\fa)$. But, as we have commented in footnote \ref{AbelLang}, we prefer to see $\pAb$ as an expansion of $\Ab$. 

Clearly, $\pAb$ and all its extensions have \sCNG\ and hence they are superabelian logics. Therefore, it is easy to see that $\Alg^*(\pAb)$, i.e.\ the equivalent algebraic semantic of $\pAb$, is the class of \emph{pointed Abelian $\ell$-groups}, i.e.\ Abelian $\ell$-groups expanded to the language $\lang{L_\pAb}$ with a new constant symbol interpreted in an arbitrary way.

To be able to easily talk about particular $\pAb$-algebras, we present the following definition.

\begin{defn}
Let us have an algebra $\A$ in a language $\lang{L}$ such that $\fa \notin \lang{L}$ and take an element $a \in A$. By $\A_a$ we denote the algebra in the language $\lang{L} \cup \{\fa\}$, which is the expansion of $\A$ and where the constant symbol $\fa$ is interpreted as~$a$.
 
If\/ $\K$ is a class of algebras, by $\po\K$ we denote $\{\A_a \mid \A \in \K, a \in A\}$. For a singleton $\{\A\}$, we may write $\po \A$ instead of\/ $\po \{\A\}$.
\end{defn}

$\R_{-1}$, $\R_0$, and $\R_1$ are natural examples of pointed Abelian $\ell$-groups. 
Note that obviously, $\Alg^*(\pAb) = \po \mathbb{AL}$.
It is also a straightforward, albeit rather technical, matter to show that this pointing operator $\po$ commutes with the standard algebraic class operators for taking isomorphisms, subalgebras, products, and ultraproducts.

\begin{thm} \label{t:pAb lots R}
Let $\A$ be a non-trivial Abelian $\ell$-group.
$\pAb$ is finitely strongly complete w.r.t.\ $\po \A=\{\A_a\mid a \in \A\}$.
\end{thm}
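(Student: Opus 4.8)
The plan is to derive this from the finitary completeness of $\Ab$ with respect to $\A$ by combining the characterisation of finite strong completeness for finitary semilinear logics (Theorem~\ref{t:semilinear finitary completeness}) with the fact that the class operator $\po(\cdot)$ commutes with the relevant algebraic operations (Lemma~\ref{l:ISPPU commutes}).

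First I would record the structural facts about $\pAb$. It is finitary, being presented by the finitary rules of Table~\ref{tab:Abel}; it is superabelian, since it satisfies $\sCNG$ as already noted; and it is semilinear, because $\pAb$ adds \emph{no} new consecutions to $\Ab$, so the hypothesis of Corollary~\ref{c:semilinearity_criterium} is satisfied vacuously, $\vee$ is a strong disjunction, and, $\pAb$ having a countable presentation, it is semilinear. Recall moreover that $\ralg{\pAb}=\po\mathbb{AL}$ and that, since the lattice reduct of a pointed Abelian $\ell$-group $\B_b$ is the lattice reduct of $\B$, we have $\lalg{\pAb}=\po\lalg{\Ab}$. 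By Theorem~\ref{t:semilinear finitary completeness} (equivalence of items 1 and 3), it therefore suffices to prove
$$
\po\lalg{\Ab}\subseteq\ISPU(\po\A,\oa).
$$

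So I would take $\B\in\lalg{\Ab}$ and $b\in B$ and show $\B_b\in\ISPU(\po\A,\oa)$. If $\B$ is trivial then $\B_b$ is isomorphic to the trivial pointed Abelian $\ell$-group $\oa$, and we are done. If $\B$ is non-trivial, I would use that $\Ab$ is finitely strongly complete with respect to the non-trivial $\A$ (the logical content of Corollary~\ref{c:Abel completeness}); by Theorem~\ref{t:semilinear finitary completeness} applied to $\Ab$ this gives $\lalg{\Ab}\subseteq\ISPU(\A,\oa)$. Since every member of $\P_\mathrm{U}(\{\A,\oa\})$ is either trivial or an ultrapower of $\A$, and $\B$ (being non-trivial) cannot embed into a trivial algebra, we get an embedding $j\colon\B\hookrightarrow\A^I/U$ into some ultrapower of $\A$. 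The same $j$ embeds $\B_b$ into $(\A^I/U)_{j(b)}$, and by Lemma~\ref{l:ISPPU commutes} we have $(\A^I/U)_{j(b)}\in\po\P_\mathrm{U}(\A)=\P_\mathrm{U}(\po\A)$; hence $\B_b\in\ISPU(\po\A)\subseteq\ISPU(\po\A,\oa)$, as required.

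I do not expect a serious obstacle here: once the tools above are in place the argument is short. The two places that need a moment's care are the separate handling of the trivial algebra (which is exactly why item 3 of Theorem~\ref{t:semilinear finitary completeness} carries the clause ``$\oa$''), and the use of Lemma~\ref{l:ISPPU commutes} to see that $\po(\cdot)$ commutes with $\I$, $\S$ and $\P_\mathrm{U}$, since this is what actually transports the completeness of $\Ab$ with respect to $\A$ up to the completeness of $\pAb$ with respect to $\po\A$.
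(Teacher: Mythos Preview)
Your argument is correct and follows essentially the paper's route: both proofs reduce to Corollary~\ref{c:Abel completeness}, Theorem~\ref{t:semilinear finitary completeness}, and the commutation of $\po$ with the class operators (Lemma~\ref{l:ISPPU commutes}). The only difference is cosmetic: the paper invokes item~2 of Theorem~\ref{t:semilinear finitary completeness} and writes the whole thing as a single chain $\ralg{\pAb}=\po\ralg{\Ab}=\po\ISPPU(\A)=\ISPPU(\po\A)$, whereas you go through item~3 and check $\lalg{\pAb}\subseteq\ISPU(\po\A,\oa)$ element by element, handling the trivial algebra separately.
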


\begin{proof}
Assume that $\Gamma\not\vdash_\pAb\f$. Given an $\lang{L}_\pAb$-formula $\p$, we define a $\lang{L}_\Ab$-formula $\p'$ by replacing all occurrences of $\fa$ by an arbitrary fixed variable $p$ not occurring in $\Gamma\cup\{\f\}$. Let us denote $\Gamma'=\{\p' \mid \p \in \Gamma\}$. Then, clearly, $\Gamma'\not\vdash_\Ab\f'$ and so $\Gamma'\not\vDash_\alg{A}\f'$ as witnessed by an evaluation $e$ which entails $\Gamma\not\vDash_{\alg{A}_{e(p)}}\f$.
\end{proof}

Let us formulate a trivial lemma which allows us to improve the previous result for $\A$ being either $\R$ or $\Q$ and will also be of use later.

\begin{lemma}\label{l:isomorRQ}
Let $\A$ be either $\R$ or $\Q$ and $a,b \in A\setminus\{0\}$. Then, $\A_a$ and $\A_b$ are isomorphic iff $a \cdot b > 0$.
\end{lemma}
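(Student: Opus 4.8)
The plan is to reduce the statement to a description of the automorphism group of the $\ell$-group $\A$ (for $\A$ either $\R$ or $\Q$). First observe that, since the underlying language of $\A$ does not contain $\fa$, an isomorphism $h\colon \A_a \to \A_b$ of pointed Abelian $\ell$-groups is nothing but an isomorphism of the $\fa$-free reducts, i.e.\ an automorphism $h$ of $\A$, which additionally satisfies $h(a)=b$. Since the order $\le$ is definable from $\lor$, such an $h$ is in particular an order automorphism. Hence $\A_a \cong \A_b$ iff there is some $h\in\mathrm{Aut}(\A)$ with $h(a)=b$, and the whole statement will follow once we identify $\mathrm{Aut}(\A)$.

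The key fact I would then establish is that for $\A\in\{\R,\Q\}$ the automorphisms of $\A$ are exactly the positive dilations $x\mapsto\lambda x$ with $\lambda\in A$ and $\lambda>0$. Indeed, any $h\in\mathrm{Aut}(\A)$ is an additive bijection with $h(0)=0$, so $h(qx)=q\,h(x)$ for all $x\in A$ and $q\in\Q$; being an order isomorphism it is strictly increasing, so $h(1)>h(0)=0$. For $\A=\Q$ this already suffices, since an additive endomorphism of $\Q$ is multiplication by $h(1)$, and $h(1)\in\Q$ with $h(1)>0$. For $\A=\R$ one further uses that a monotone additive map is pinned down by its values on $\Q$: squeezing a real $x$ between rationals $p<x<q$ gives $p\,h(1)=h(p)\le h(x)\le h(q)=q\,h(1)$, and letting $p,q\to x$ yields $h(x)=h(1)\,x$. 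Conversely, every map $x\mapsto\lambda x$ with $\lambda>0$ is clearly a group and lattice automorphism.

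Finally I would combine the two observations: $\A_a\cong\A_b$ iff $b=\lambda a$ for some positive $\lambda\in A$. If such a $\lambda$ exists, then $a$ and $b$ have the same sign and so $a\cdot b>0$; conversely, if $a\cdot b>0$ then $\lambda:=b/a$ lies in $A$ (here we use that both $\R$ and $\Q$ are fields — this is exactly the place where the argument would break for $\Z$) and is positive, so $x\mapsto\lambda x$ witnesses $\A_a\cong\A_b$. The only genuinely non-routine point is the characterisation of $\mathrm{Aut}(\R)$ as positive dilations, i.e.\ excluding the pathological non-monotone additive self-maps of $\R$ built from a Hamel basis; this is precisely what the order-preservation requirement rules out, via the squeezing argument above.
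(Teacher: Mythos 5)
Your proposal is correct, and its substantive content coincides with the paper's one-line proof: the witness for the ``if'' direction is exactly the dilation $x \mapsto \frac{b}{a}\cdot x$, which is all the paper writes down (treating the converse as obvious). Where you differ is in how much machinery you invest in the ``only if'' direction: you characterize the full automorphism group of $\A$, proving that every $\ell$-group automorphism of $\R$ or $\Q$ is a positive dilation, including the squeeze argument needed to exclude the pathological non-monotone additive self-maps of $\R$. That characterization is correct and of independent interest (it pins down $\mathrm{Aut}(\A)$ completely, hence describes exactly which pointed expansions of $\A$ are isomorphic), but it is more than the lemma requires: for the converse it suffices to note that any isomorphism $h\colon \A_a \to \A_b$ is an automorphism of the $\fa$-free reduct fixing $0$ and preserving the lattice order, hence preserving signs, so $h(a)=b$ forces $a$ and $b$ to have the same sign and thus $a\cdot b>0$. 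So the paper's route is shorter because it never needs to know what an arbitrary automorphism looks like, while yours buys the stronger structural fact at the cost of the Hamel-basis discussion; you also rightly flag that the field structure (availability of $b/a$ in $A$) is exactly what fails for $\Z$, which matches the paper's subsequent remark that $\Z_a \cong \Z_b$ only when $a=b$.
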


\begin{proof}
If $\mathbf{A}_a$ and $\mathbf{A}_b$ are isomorphic, the isomorphism must preserve the underlying natural order, which trivially forces the designated units $a$ and $b$ to have the same sign; hence $a \cdot b > 0$. For the other direction, assume $a \cdot b > 0$. Then $\frac{b}{a} > 0$, and the mapping $h\colon x \mapsto \frac{b}{a} \cdot x$ is an order-preserving group automorphism mapping $a$ to $b$, providing the needed isomorphism.
\end{proof}

\begin{thm}\label{t:pAbcompl}
The logic $\pAb$ is finitely strongly complete w.r.t.\ $\{\R_{-1},\R_0,\R_1\}$ and w.r.t.\ $\{\Q_{-1},\Q_0,\Q_1\}$.
\end{thm}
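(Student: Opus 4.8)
The plan is to obtain this as an immediate corollary of Theorem~\ref{t:pAb lots R} together with Lemma~\ref{l:isomorRQ}. Theorem~\ref{t:pAb lots R} applied to the non-trivial Abelian $\ell$-group $\R$ already yields that $\pAb$ is finitely strongly complete with respect to $\po\R=\{\R_a\mid a\in\R\}$, and likewise with respect to $\po\Q$. So all that is left is to cut each of these (large) classes down to three algebras.

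First I would recall that validity of consecutions in an algebra is invariant under isomorphism, so that for a superabelian logic the notion of finite strong completeness with respect to a class $\K$ depends only on the isomorphism types occurring in $\K$. Next, by Lemma~\ref{l:isomorRQ}, for every $a\in\R\setminus\{0\}$ we have $\R_a\cong\R_1$ when $a>0$ and $\R_a\cong\R_{-1}$ when $a<0$; adding the trivial observation that $\R_0=\R_0$, this shows that every member of $\po\R$ is isomorphic to one of $\R_{-1},\R_0,\R_1$, while conversely $\{\R_{-1},\R_0,\R_1\}\subseteq\po\R$. Hence ${\vDash_{\po\R}}={\vDash_{\{\R_{-1},\R_0,\R_1\}}}$, and combining this with Theorem~\ref{t:pAb lots R} we conclude that, for every finite $\Gamma\cup\{\f\}$,
$$
\Gamma\vdash_\pAb\f \quad\Longleftrightarrow\quad \Gamma\vDash_{\po\R}\f \quad\Longleftrightarrow\quad \Gamma\vDash_{\{\R_{-1},\R_0,\R_1\}}\f,
$$
which is the first claim. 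The second claim follows by repeating the argument verbatim with $\Q$ in place of $\R$.

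There is essentially no hard step here; the only two points that call for (minimal) care are that Lemma~\ref{l:isomorRQ} is stated only for nonzero elements, so the algebra $\R_0$ (resp.\ $\Q_0$) has to be retained in the list explicitly rather than being absorbed into another case, and that one should make the appeal to isomorphism-invariance of $\vDash$ explicit when passing between $\po\R$ and a set of representatives of its isomorphism types.
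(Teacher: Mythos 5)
Your proposal is correct and is essentially the argument the paper intends: Theorem~\ref{t:pAb lots R} applied to $\R$ (resp.\ $\Q$) gives finite strong completeness w.r.t.\ $\po\R$ (resp.\ $\po\Q$), and Lemma~\ref{l:isomorRQ} (introduced in the paper precisely to ``improve the previous result'') collapses these classes to the three isomorphism representatives $\A_{-1},\A_0,\A_1$ via isomorphism-invariance of $\vDash$. Your explicit remark about keeping $\R_0$ separate, since the lemma only covers nonzero points, is a correct and appropriate touch.
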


Observe that the same would not work e.g.\ for the Abelian $\ell$-group of the integers, since $\Z_a$ and $\Z_b$ are not isomorphic unless $a=b$. In fact, one can show that $\Z_a$ generates a different variety for each $a \in \mathbb Z$.

\begin{prop}\label{p:varietyZ}
For any $a,b \in \mathbb{Z}$, we have $\HSP(\Z_a)=\HSP(\Z_b)$ if and only if $a=b$.
\end{prop}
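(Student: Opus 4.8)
The implication $a=b\Rightarrow\HSP(\Z_a)=\HSP(\Z_b)$ is trivial, as the two pointed algebras then coincide, so the plan is to prove the converse by recovering both the sign and the absolute value of $a$ from the equational theory of $\HSP(\Z_a)$. The only general fact used is the standard one that a variety is axiomatised by the identities valid in any of its generating sets; in particular, for the single generator $\Z_a$, an identity $\sigma$ is valid throughout $\HSP(\Z_a)$ iff $\Z_a\models\sigma$, and therefore $\HSP(\Z_a)=\HSP(\Z_b)$ forces $\Z_a$ and $\Z_b$ to satisfy exactly the same $\lang{L_\pAb}$-identities. Below, terms are written using the $\ell$-group operations available in every pointed Abelian $\ell$-group (with $-t:=t\imp\tr$ and $s-t=t\imp s$), and $|t|$ abbreviates $t\lor(-t)$, so that it is interpreted as absolute value.

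The sign is detected by $\mu\colon\fa\lor\tr\approx\fa$, which holds in $\Z_a$ precisely when $a\ge0$; hence $\HSP(\Z_a)=\HSP(\Z_b)$ yields $a\ge0\Leftrightarrow b\ge0$. For the absolute value, fix $n\ge1$, take a variable $x$, put $w_n:=(n\cdot|x|)\land|\fa|$, and consider
$$\tau_n\colon\qquad w_n\;\land\;\bigl(|\fa|-w_n\bigr)\;\approx\;\tr.$$
I claim $\Z_a\models\tau_n$ iff $|a|\le n$. In any pointed Abelian $\ell$-group one has $0\le w_n\le|\fa|$, and in $\Z$ an element $c$ with $0\le c\le|a|$ satisfies $c\land(|a|-c)=0$ iff $c\in\{0,|a|\}$; here the discreteness of $\Z$ is essential, and this is exactly what makes such a separation impossible over $\R$ or $\Q$, cf.\ Lemma~\ref{l:isomorRQ}. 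Evaluated in $\Z_a$, the term $w_n$ takes the value $\min(n|x|,|a|)$: this is $0$ at $x=0$, and it equals $|a|$ whenever $|x|\ge1$ and $|a|\le n$, whereas for $|a|>n$ it takes the value $n$ at $x=1$, which lies strictly between $0$ and $|a|$. So $\tau_n$ holds in $\Z_a$ iff $|a|\le n$, as claimed.

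Assembling the pieces: if $\HSP(\Z_a)=\HSP(\Z_b)$, then $a$ and $b$ share the same sign, and since $|a|\le n\Leftrightarrow|b|\le n$ for every $n\ge1$ we also get $|a|=|b|$; hence $a=b$. The single step that I expect to require genuine care is the verification of $\tau_n$ — in particular, checking that $w_n$ is really order-bounded by $\tr$ and $|\fa|$, so that ``$w_n\in\{\tr,|\fa|\}$'' is faithfully captured by the one identity $\tau_n$, and running the computation through for both signs of $a$ — but this is a routine case analysis, and no heavier machinery (Hölder's theorem, Corollary~\ref{c:Abel completeness}, and the like) is needed.
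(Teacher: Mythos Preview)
Your argument has a small but genuine gap: the family $\{\mu\}\cup\{\tau_n\mid n\ge1\}$ does not separate $\Z_0$ from $\Z_1$. Both algebras satisfy $\mu$ (since $0\ge0$ and $1\ge0$), and for every $n\ge1$ both satisfy $\tau_n$ (since $|0|\le n$ and $|1|\le n$). Thus the inference ``$|a|\le n\Leftrightarrow|b|\le n$ for all $n\ge1$, hence $|a|=|b|$'' fails exactly when $\{|a|,|b|\}=\{0,1\}$: knowing the truth of $\tau_n$ only for $n\ge1$ recovers $\max(|a|,1)$, not $|a|$ itself. The repair is immediate---add one identity detecting $a=0$, e.g.\ $\fa\land(-\fa)\approx\tr$ (equivalently $|\fa|\approx\tr$), which holds in $\Z_a$ iff $a=0$---but as written the proof is incomplete for the pairs $(0,1)$ and $(0,-1)$ (the latter is caught by $\mu$, the former is not).

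Aside from this, your route is correct and genuinely different from the paper's. The paper proceeds by a direct case split: opposite ``signs'' are handled by the inequality $\fa\ge0$ (and its dual), and for $0<a<b$ it exhibits the single inequality $(a\cdot x-\fa)\lor(-x)\ge0$, which holds in $\Z_a$ (split into $e(x)\le0$ and $e(x)\ge1$) but fails in $\Z_b$ at $e(x)=1$. Your approach is more parametric, extracting the sign and modulus of $a$ from a uniform family of identities; both arguments ultimately hinge on the discreteness of $\Z$---yours via the gap between $0$ and $|a|$ that forces $w_n\in\{0,|a|\}$, the paper's via the gap between $0$ and $1$ in the range of $x$.
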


\begin{proof}
    One direction is obvious. To prove the converse one, let us assume $a \neq b$. To show that $\HSP(\Z_a) \neq \HSP(\Z_b)$, it is sufficient to find an equation which is valid in one of the generating algebras, $\Z_a$ or $\Z_b$, but not in the other.
    If $a<0$ and $b \geq 0$ take the equation $\fa \geq 0$. This equation clearly holds in $\Z_b$ but it does not hold in $\Z_a$.
    The case $a \leq 0$ and $b>0$ is similar.
    Therefore, it remains to check the cases where $a,b>0$ or $a,b<0$.
    We will check only the first case (the second one is analogous).
    Moreover, let us assume without loss of generality that $0 <a<b$.
     
    Consider the following equation: 

    \begin{equation*} \tag{*}
        (a \cdot x -\fa) \lor (-x) \geq 0.
    \end{equation*}
    This equation is not valid in $\Z_b$, since if we take an evaluation $e$ such that $e(x)=1$ we get $(a \cdot e(x) -b) \lor (-e(x))=(a-b) \lor -1 < 0$. Thus the equation (*) does not hold in $\Z_b$.
    
    We show the same equation holds in $\Z_a$. First let us consider an evaluation $e$ such that $e(x) \leq 0$. Clearly, we get $-e(x) \geq 0$ and thus $(a \cdot e(x) - a) \lor e(-x) \geq 0.$  
    It remains to consider an evaluation $e$ such that $e(x)>0$ (and thus $e(x) \geq 1$). Then $a \cdot e(x) -a \geq a \cdot 1-a=0$. Thus the equation (*) is valid in $\Z_a$.    
\end{proof}

\section{Łukasiewicz unbound logic (and other logics of reals)}\label{s:LukUnbound}

In this section, we formally introduce Łukasiewicz unbound logic within our framework, motivated by the observation that its original algebraic semantics is isomorphic to the pointed Abelian $\ell$-group $\R_{-1}$. Our primary goal is to axiomatize this logic as an extension of pointed Abelian logic. We first define the finitary version, $\Lu$, and establish its finite strong completeness with respect to $\R_{-1}$. Then, we introduce its infinitary version, $\Lu_{\infty}$, by adding the Archimedean rule to achieve strong completeness, and make the connection to standard Łukasiewicz logic precise by providing a formal translation. Generalizing the methods developed for this specific case, we broaden our scope to systematically axiomatize the logics of other prominent pointed groups, such as $\R_0$ and $\R_1$. We establish a correspondence between algebraic properties of the point element $\fa$ and simple syntactic rules, allowing us to provide a comprehensive overview of completeness results for this family of logics.

Let us start this section by observing that the algebra $\alg{R}_{-1}$ is isomorphic to the algebra $\alg{LU}$ that was used in~\cite{Cintula-Grimau-Noguera-Smith:DegreesTo11} to introduce \L ukasiewicz unbound logic. $\alg{{LU}}$ is a member of $\po\mathbb{AL}$ defined over the reals with lattice operations defined in the usual way and other operations/constants defined as:
$$
x \to^\alg{LU} y = 1 - x +y \qquad x \conj^\alg{LU} y = x + y - 1 \qquad 
\tr^\alg{LU} = 1  \qquad \fa^\alg{LU} = 0
$$
Clearly, $f\colon \alg{R}_{-1} \longrightarrow \alg{LU}$ defined as $f(x) = x +1$ is the desired isomorphism. As already mentioned in the introduction, the operations on $\alg{LU}$ closely resemble those of the well-known standard MV-algebra $\mathbfL$ which provides semantics for the \L ukasiewicz logic (see more details at the end of this section). In the context of this paper we will see it as an algebra in the language of $\pAb$ (though it clearly is not a member of $\ralg{\pAb}$)\footnote{
Indeed, here we choose a nonstandard representation of \MV-algebras. It is easy to see that this representation is equivalent to the most common one (found e.g.\ in~\cite{Cignoli-Ottaviano-Mundici:AlgebraicFoundations}), the reader can check it as an exercise or check any basic literature related to \MV-algebras (again see e.g.~\cite{Cignoli-Ottaviano-Mundici:AlgebraicFoundations}).
} defined on the real unit interval $[0,1]$ with lattice operations defined in the usual way and other operations/constants defined as:
$$
x \to^\alg{\mathbfL} y = \min\{1,1 - x +y\} \qquad x \conj^\alg{\mathbfL} y = \max\{0,x + y - 1\} \qquad 
\tr^\alg{\mathbfL} = 1  \qquad \fa^\alg{\mathbfL} = 0
$$

Recall that we know that $\pAb$ is \emph{finitely} strongly complete with respect to $\{\alg{R}_{-1}, \alg{R}_{1}, \alg{R}_{0}\}$, thus our goal is to find a finitary rule which allows us to extend $\pAb$ in a way that preserves semilinearity and also ensures that the rule is not valid in $\R_0$ and $\R_1$.

\begin{defn}
\emph{\L ukasiewicz unbound logic} $\Lu$ is the extension of the logic $\pAb$ by the rule:
\begin{equation*}\tag{Lu}
   \fa \lor \f \wdash \f
\end{equation*}
\end{defn}

\begin{lemma}
$\Lu$ is a semilinear logic.
\end{lemma}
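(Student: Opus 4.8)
The plan is to deduce semilinearity from Corollary~\ref{c:semilinearity_criterium}, which turns the problem into a single syntactic verification. Observe that $\Lu$ is the expansion of $\Ab$ by the one-element set $\mathcal{S}=\{(\text{Lu})\}$ of consecutions in the language $\lang{L_\pAb}$ (since $\pAb$ is nothing but $\Ab$ presented in that larger language), that $\Lu$ is a superabelian logic, and that it has a finite, hence countable, presentation. So, by Corollary~\ref{c:semilinearity_criterium}, it is enough to check that the $\lor$-form $(\text{Lu})^\lor$, namely the consecution $(\fa \lor \f) \lor \p \wdash \f \lor \p$, is valid in $\Lu$.

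To establish this, one first applies modus ponens to the premise $(\fa \lor \f) \lor \p$ together with the substitution instance $(\fa \lor \f) \lor \p \imp \fa \lor (\f \lor \p)$ of the associativity axiom \rul{a_\lor}, obtaining $(\fa \lor \f) \lor \p \vdash_\Lu \fa \lor (\f \lor \p)$. Then one applies the rule $(\text{Lu})$ with $\f \lor \p$ substituted for $\f$, which gives $\fa \lor (\f \lor \p) \vdash_\Lu \f \lor \p$. Composing the two derivations yields $(\fa \lor \f) \lor \p \vdash_\Lu \f \lor \p$, i.e.\ $(\text{Lu})^\lor$ is valid in $\Lu$.

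At this point Corollary~\ref{c:semilinearity_criterium} applies directly: $\lor$ is a strong disjunction in $\Lu$ and, as $\Lu$ has a countable presentation, $\Lu$ is semilinear.

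I do not expect any genuine obstacle in this argument; the only point worth a moment's attention is confirming that $\Lu$ really falls under the hypotheses of Corollary~\ref{c:semilinearity_criterium}, i.e.\ that adding a constant to the language of $\Ab$ and then the finitary rule $(\text{Lu})$ still yields a superabelian logic that is an expansion of $\Ab$ by a set of consecutions and has a countable presentation --- which it plainly does.
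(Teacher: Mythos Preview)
Your proposal is correct and follows essentially the same route as the paper: invoke Corollary~\ref{c:semilinearity_criterium}, then verify $(\text{Lu})^\lor$ by applying associativity $\rul{a_\lor}$ followed by an instance of the rule $(\text{Lu})$ itself. The paper's proof is just a terser version of what you wrote.
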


\begin{proof}
By Corollary~\ref{c:semilinearity_criterium}, it is enough to check that $(\fa \lor \f) \lor \p \wdash \f \lor \p$ is valid in $\Lu$. This clearly follows from the use of rules $\rul{A_\lor}$ and $\fa \lor \f \wdash \f$.
\end{proof}

To obtain the desired completeness, let us characterize linearly ordered Abelian $\ell$-groups satisfying the rule $\rul{Lu}$. We give a slightly more general result, which will allow us to show that we cannot replace $\rul{Lu}$ by a simpler, weaker rule that might initially seem sufficient.

\begin{lemma} \label{l:similar extensions}
Let $\A \in \ralg{\pAb}$ and $\A \neq \oa$. Then, the following conditions are equivalent:
\begin{enumerate}
\item $\fa^\A \not\geq 0$.
\item The rule $\fa \wdash \f$ is valid in $\A$.
\end{enumerate}

Furthermore, if $\A$ is linearly ordered, we can add the following condition:
\begin{enumerate}\setcounter{enumi}{2}	
\item The rule $\fa \lor \f \wdash \f$ is valid in $\A$.
\end{enumerate}
\end{lemma}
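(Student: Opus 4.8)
The statement is essentially an unwinding of the definition of validity, so the plan is a direct case analysis; the only technical point is the use of non-triviality. Throughout, recall that $\fa$ is a constant, so every $\A$-evaluation $e$ satisfies $e(\fa)=\fa^\A$; hence whether the premise of the rule in (2) (resp.\ in (3)) is satisfied depends only on the fixed element $\fa^\A$ (resp.\ on $\fa^\A$ together with $e(\f)$).

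First I would prove (1)$\Rightarrow$(2): if $\fa^\A\not\geq 0$, then no $\A$-evaluation satisfies $e(\fa)\geq 0$, so the rule $\fa\wdash\f$ is vacuously valid in $\A$ for every formula $\f$. Then (2)$\Rightarrow$(1) by contraposition: assuming $\fa^\A\geq 0$, use $\A\neq\oa$ to pick $a\in A$ with $a\neq 0$; by antisymmetry of the order, not both $a\geq 0$ and $-a\geq 0$ can hold, so without loss of generality there is $b\in A$ with $b\not\geq 0$. The $\A$-evaluation sending a variable $x$ to $b$ then satisfies $e(\fa)=\fa^\A\geq 0$ but $e(x)=b\not\geq 0$, refuting the rule $\fa\wdash x$; hence (2) fails.

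For the linear case it remains to establish (2)$\Leftrightarrow$(3). The implication (3)$\Rightarrow$(2) needs no linearity: if $e(\fa)\geq 0$, then $e(\fa\lor\f)=\fa^\A\lor e(\f)\geq\fa^\A\geq 0$, so (3) gives $e(\f)\geq 0$. For (2)$\Rightarrow$(3), suppose $e(\fa\lor\f)\geq 0$, i.e.\ $\fa^\A\lor e(\f)\geq 0$; since $\A$ is linearly ordered, either $\fa^\A\geq 0$, in which case (2) yields $e(\f)\geq 0$, or else $e(\f)\geq 0$ already, and either way the conclusion of (3) holds. (Alternatively one can obtain (2)$\Leftrightarrow$(3) from Lemma~\ref{l:about nabla-forms} together with the idempotency of $\lor$, noting that over a linearly ordered $\A$ the rule in (3) is interderivable with the $\lor$-form $\fa\lor\p\wdash\f\lor\p$ of the rule in (2).) I expect no real obstacle; the only step demanding a little care is the non-triviality argument in (2)$\Rightarrow$(1), where one must genuinely exhibit an element that is not $\geq 0$ — which is exactly what becomes impossible precisely when $\A=\oa$.
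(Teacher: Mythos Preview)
Your proof is correct and follows essentially the same approach as the paper's: both argue (1)$\Rightarrow$(2) vacuously, use non-triviality to produce a counterexample for the converse, and invoke linearity only for the direction leading to (3). The sole organizational difference is that the paper links (3) directly to (1) (showing (1)$\Rightarrow$(3) via linearity and $\neg$(1)$\Rightarrow\neg$(3) in one stroke with the $\neg$(1)$\Rightarrow\neg$(2) argument), whereas you route (3) through (2); the underlying computations are identical.
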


\begin{proof}
The implications from 1 to 2 and from 1 to 3 are clear (note that in the second case we need to use linearity of $\alg{A}$ to obtain that if $\fa^\A \not\geq 0$ and $\fa \vee \f \geq 0$, then $\f \geq 0$). We prove the converse ones at once by contradiction:
assume that $\fa \vDash_\alg{A} \varphi$ (resp.\ that  $\fa\vee \varphi \vDash_\alg{A} \varphi$)
and $\fa^\A \geq 0$. For an arbitrary element $a$ and any evaluation $e(\varphi) = a$, the premises of both rules are trivially valid and so we obtain $a \geq 0$, a contradiction with the assumption that $\A \neq \oa$.
\end{proof}

Thus, we know that in particular $\R_{-1} \in \lalg{\Lu}$ but $\R_0, \R_1 \not\in \lalg{\Lu}$. The next example shows that the rules  $\fa \lor \f \wdash \f$ and  $\fa \wdash \f$ are not interderivable and thus the extension of $\pAb$ by the rule $\fa \wdash \f$ is strictly weaker than $\Lu$.

\begin{example}\label{e:semilinearity of superabelian logics}
Let us consider the Abelian $\ell$-group $\R_{-1} \times \R_0$. Clearly, this Abelian $\ell$-group satisfies the rule $\fa \wdash \f$, but it does not satisfy $\fa \lor \f \wdash \f$.
\end{example}

\begin{thm}\label{t:Lu completeness}
$\Lu$ is finitely strongly complete with respect to any of the following sets of algebras: $\{\R_{-1}\}$, $\{\Q_{-1}\}$, and $\{\A_{a} \mid a < 0\}$ for any non-trivial Abelian $\ell$-group $\A$. 
\end{thm}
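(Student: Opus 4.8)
The plan is to reduce all three completeness claims, via the characterization of finite strong completeness for finitary semilinear superabelian logics, to a single embedding fact about linearly ordered algebras. Since $\Lu$ is the extension of the finitary logic $\pAb$ by the one finitary rule $\fa\lor\f\wdash\f$, it is finitary, and we have already established that it is semilinear; hence Theorem~\ref{t:semilinear finitary completeness} applies, and for any class $\K\subseteq\ralg{\Lu}$ it is equivalent to ask that $\Lu$ be finitely strongly complete w.r.t.\ $\K$ and that $\lalg{\Lu}\subseteq\ISPU(\K,\oa)$. So, for each of the three candidate classes, I would check these two conditions. That the class is contained in $\ralg{\Lu}$ is immediate from Lemma~\ref{l:similar extensions}: every member is a non-trivial linearly ordered pointed Abelian $\ell$-group whose constant $\fa$ is interpreted by a negative element, and therefore validates $\fa\lor\f\wdash\f$; for the third class this presupposes reading ``non-trivial Abelian $\ell$-group $\A$'' as ``non-trivial \emph{linearly ordered} Abelian $\ell$-group $\A$'', since otherwise some $\A_a$ with $a<0$ fails that rule. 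For the inclusion $\lalg{\Lu}\subseteq\ISPU(\K,\oa)$, Lemma~\ref{l:similar extensions} again identifies $\lalg{\Lu}$ as the trivial algebra $\oa$ (which lies in $\ISPU(\K,\oa)$ for free) together with all $\B_b$ in which $\B$ is a non-trivial linearly ordered Abelian $\ell$-group and $b<0$, so it suffices to place every such $\B_b$ in $\ISPU(\K)$.

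This last step is the heart of the proof and is identical for all three classes, carried out with a ``base group'' $\mathbf{G}$ equal to $\R$, $\Q$, or $\A$ respectively. Fix $\B_b\in\lalg{\Lu}$ with $\B$ non-trivial and $b<0$. Since $\mathbf{G}$ is a non-trivial Abelian $\ell$-group, $\Ab$ is finitely strongly complete w.r.t.\ $\mathbf{G}$ by Corollary~\ref{c:Abel completeness}, so Theorem~\ref{t:semilinear finitary completeness} applied to $\Ab$ (a finitary semilinear logic, by Theorem~\ref{t:semilinearity of Ab}) gives $\lalg{\Ab}\subseteq\ISPU(\mathbf{G},\oa)$; since $\B$ is non-trivial and linearly ordered, and an ultraproduct of copies of $\mathbf{G}$ and $\oa$ is either an ultrapower of $\mathbf{G}$ or trivial, $\B$ must embed into some ultrapower $\mathbf{G}^{\ast}=\mathbf{G}^{I}/U$ of $\mathbf{G}$, say via $h$. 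Then $h$ is also an embedding $h\colon\B_b\hookrightarrow(\mathbf{G}^{\ast})_{h(b)}$ of the corresponding pointed algebras. Pick a representative $h(b)=[(g_i)_{i\in I}]_U$; since $h$ is an injective order-embedding, $h(b)\leq 0$ and $h(b)\neq 0$ hold in $\mathbf{G}^{\ast}$, which means precisely that the set $S=\{i\in I\mid g_i<0\}$ belongs to $U$. By the computation in the proof of Lemma~\ref{l:ISPPU commutes}, $(\mathbf{G}^{\ast})_{h(b)}\cong\prod_{i\in I}(\mathbf{G}_{g_i})/U$, and as $S\in U$ this ultraproduct is isomorphic to an ultraproduct of the sub-family $(\mathbf{G}_{g_i})_{i\in S}$; for $i\in S$ we have $g_i<0$, whence either $\mathbf{G}_{g_i}\cong\mathbf{G}_{-1}$ by Lemma~\ref{l:isomorRQ} when $\mathbf{G}\in\{\R,\Q\}$, or $\mathbf{G}_{g_i}=\A_{g_i}$ is literally a member of $\K$ when $\mathbf{G}=\A$. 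In all three cases $(\mathbf{G}^{\ast})_{h(b)}$ is isomorphic to an ultraproduct of members of $\K$, so $\B_b\in\ISPU(\K)$, as required.

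Putting the two verifications together, $\lalg{\Lu}\subseteq\ISPU(\K,\oa)$ for each of $\K\in\{\,\{\R_{-1}\},\,\{\Q_{-1}\},\,\{\A_a\mid a<0\}\,\}$, and Theorem~\ref{t:semilinear finitary completeness} then delivers the three finite strong completeness statements simultaneously. I expect the only genuine work to lie in the bookkeeping of the core step: identifying the pointed ultrapower $(\mathbf{G}^{\ast})_{h(b)}$ with the ultraproduct of the pointed factors $\mathbf{G}_{g_i}$, passing to the $U$-large index set $S$, and gluing the coordinatewise isomorphisms $\mathbf{G}_{g_i}\cong\mathbf{G}_{-1}$ supplied by Lemma~\ref{l:isomorRQ} into a single isomorphism of ultraproducts. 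Conceptually nothing deeper is required; the subtlest point is the one flagged above, namely that in the third class $\A$ must be taken linearly ordered, for otherwise the class contains algebras that are not $\Lu$-algebras and the asserted completeness fails already in the soundness direction.
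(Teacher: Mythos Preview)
Your argument is correct. The paper organizes the proof differently: it first handles the general class $\{\A_a\mid a<0\}$ and then derives the $\R_{-1}$ and $\Q_{-1}$ cases from it via Lemma~\ref{l:isomorRQ}, rather than treating all three in parallel. More substantively, instead of descending to the level of $\Ab$ and adding the point by hand, the paper invokes Theorem~\ref{t:pAb lots R} (finite strong completeness of $\pAb$ w.r.t.\ $\po\A$) together with Corollary~\ref{c:ultra-decomposition of semiliar logics} applied to the two-piece partition $\po\A=\{\A_a\mid a<0\}\cup\{\A_a\mid a\geq 0\}$: this yields $\lalg{\Lu}\subseteq\lalg{\pAb}\subseteq\I(\oa)\cup\ISPU(\{\A_a\mid a<0\})\cup\ISPU(\{\A_a\mid a\geq 0\})$, and since $\fa\geq 0$ is a ground inequation preserved by $\ISPU$, no non-trivial member of the last piece lies in $\lalg{\Lu}$ by Lemma~\ref{l:similar extensions}; the converse direction of Corollary~\ref{c:ultra-decomposition of semiliar logics} then finishes. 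Your route essentially reproves the content of Theorem~\ref{t:pAb lots R} inline---embedding at the $\ell$-group level and tracking the constant through the ultraproduct explicitly via Lemma~\ref{l:ISPPU commutes}---which gives a self-contained argument uniform in all three cases at the cost of duplicating machinery already packaged in that theorem; the paper's version is shorter by treating it as a black box. Your observation about linearity of $\A$ is well taken: the soundness direction for the third class genuinely needs every $\A_a$ with $a<0$ to validate $\rul{Lu}$, and the paper's proof is equally silent on this point.
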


\begin{proof}
We prove the final part of the claim, the rest then follows from Lemma~\ref{l:isomorRQ}. Using Theorem~\ref{t:pAb lots R} and Corollary~\ref{c:ultra-decomposition of semilinear logics}, we know that 
$$
\lalg{\Lu} \subseteq \lalg{\pAb} \subseteq \I(\oa) \cup \ISPU(\{\A_{a} \mid a < 0\}) \cup \ISPU(\{\A_{a} \mid a \geq 0\}).
$$
Clearly, for any non-trivial element of $\ISPU(\{\A_{a} \mid a \geq 0\})$, we have $\fa^{\A_{a}}\geq 0$ (an equation preserved by ultrapowers) and thus it is not a member of $\lalg{\Lu}$ due to the previous lemma. Thus 
$$
\lalg{\Lu} \subseteq \I(\oa) \cup \ISPU(\{\A_{a} \mid a < 0\})
$$
and so the proof is done by Corollary~\ref{c:ultra-decomposition of semilinear logics}.\qedhere 
\end{proof}

It is easy to see that the logic $\Lu$ is not strongly complete with respect to $\R_{-1}$. For instance, the rule $\rul{Arch}^\lor$, introduced in the previous section, is valid in $\R_{-1}$ but is not derivable in \Lu\ (by Example \ref{e:ArchNonArch}). This shows that the full set of validities in $\R_{-1}$ cannot be captured by a finitary system extending $\Lu$ and will therefore require an infinitary axiomatization. To this end, we define now the following extension of $\Lu$:

\begin{defn}
By \emph{infinitary Łukasiewicz unbound logic} $\Lu_{\infty}$ we understand the extension of the logic $\Lu$ by the rule $\rul{Arch}^\lor$.
\end{defn}

Equivalently, $\Lu_{\infty}$ can be seen as the expansion of $\rAb$ to the language of $\pAb$ by the rule $\rul{Lu}$.  

\begin{thm}\label{t:LU completeness}
The logic $\Lu_{\infty}$ is strongly complete with respect to $\R_{-1}$.
\end{thm}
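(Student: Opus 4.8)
The plan is to obtain the result from the general completeness machinery for semilinear logics, reducing everything to the linearly ordered members of $\ralg{\Lu_\infty}$. By Theorem~\ref{t:semilinear completeness} it suffices to show that $\Lu_\infty$ is semilinear, that $\R_{-1}\in\ralg{\Lu_\infty}$, and that $\lalg{\Lu_\infty}\subseteq\ISP_\omega(\R_{-1})$. Semilinearity is established as for $\Lu$: by Corollary~\ref{c:semilinearity_criterium}, $\Lu_\infty$ is the expansion of $\Ab$ in the language $\lang{L_\pAb}$ by $\rul{Lu}$ and $\rul{Arch}^\lor$; the rule $\rul{Arch}^\lor$ is already a $\lor$-form, while $\rul{Lu}^\lor$, that is $(\fa\lor\f)\lor\p\wdash\f\lor\p$, is valid in $\Lu$ by associativity of $\lor$ and $\rul{Lu}$; since $\Lu_\infty$ has a countable presentation, $\lor$ is a strong disjunction and $\Lu_\infty$ is semilinear.

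Next I would pin down $\lalg{\Lu_\infty}$. Any non-trivial linearly ordered $\pAb$-algebra has the form $\A_a$ with $\A$ a non-trivial linearly ordered Abelian $\ell$-group. Since $\rul{Arch}^\lor$ does not involve $\fa$, the lemma relating $\rul{Arch}$ and $\rul{Arch}^\lor$ to Archimedeanity gives that $\A_a$ validates $\rul{Arch}^\lor$ iff $\A$ is Archimedean; and by Lemma~\ref{l:similar extensions}, $\A_a$ validates $\rul{Lu}$ iff $\fa^{\A_a}=a\not\geq 0$, i.e.\ $a<0$. Hence $\lalg{\Lu_\infty}$ consists of the trivial algebra $\oa$ together with the algebras $\A_a$ where $\A$ is a non-trivial linearly ordered Archimedean Abelian $\ell$-group and $a<0$; in particular $\R_{-1}\in\lalg{\Lu_\infty}\subseteq\ralg{\Lu_\infty}$.

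It remains to embed each such $\A_a$ into $\R_{-1}$. By H\"older's theorem (as already used for Theorem~\ref{t:realAbel}), $\A$ embeds into $\R$ as a linearly ordered group; on a chain the lattice operations, the group inverse, $0$, and hence $\imp$ are all determined by the group order, so this is an embedding of Abelian $\ell$-groups, and it extends to an embedding $\A_a\hookrightarrow\R_{c}$, where $c$ is the image of $a$ and $c<0$ because $a<0$. By Lemma~\ref{l:isomorRQ}, $\R_{c}\cong\R_{-1}$, so $\A_a$ is isomorphic to a subalgebra of $\R_{-1}$ and thus lies in $\ISP_\omega(\R_{-1})$; since $\oa\in\ISP_\omega(\R_{-1})$ as well, we obtain $\lalg{\Lu_\infty}\subseteq\ISP_\omega(\R_{-1})$, and Theorem~\ref{t:semilinear completeness} ($3\Rightarrow1$) yields the strong completeness of $\Lu_\infty$ with respect to $\R_{-1}$. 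I do not anticipate a real obstacle here: the argument merely assembles earlier results, with H\"older's theorem doing the geometric work exactly as in $\rAb$ and Lemma~\ref{l:isomorRQ} absorbing the interpretation of $\fa$; the only point needing a little care is reading off the two imposed conditions ($\rul{Arch}^\lor$ and $\rul{Lu}$) simultaneously in order to describe $\lalg{\Lu_\infty}$ correctly.
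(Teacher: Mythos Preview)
Your proposal is correct and follows essentially the same approach as the paper: establish semilinearity via Corollary~\ref{c:semilinearity_criterium}, characterize the non-trivial chains in $\lalg{\Lu_\infty}$ as Archimedean pointed $\ell$-groups with negative point using Lemma~\ref{l:similar extensions} and the lemma on $\rul{Arch}^\lor$, then apply H\"older's theorem and Lemma~\ref{l:isomorRQ} to embed into $\R_{-1}$, concluding with Theorem~\ref{t:semilinear completeness}. Your write-up is in fact slightly more explicit than the paper's (you spell out the semilinearity check and the handling of the trivial algebra), but the argument is the same.
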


\begin{proof}
By Corollary~\ref{c:semilinearity_criterium}, the logic $\Lu_{\infty}$ is semilinear.
Since $\Lu_{\infty}$ is an expansion of $\rAb$, every linearly ordered $\Lu_{\infty}$-algebra is a pointed Archimedean $\ell$-group. Since $\Lu_{\infty}$ is an extension of $\Lu$, by Lemma~\ref{l:similar extensions}, we know that in any non-trivial $\A$ linearly ordered $\Lu_{\infty}$-algebra we have $\fa^\A < 0$. Thus, $\A$ has to be a pointed Archimedean $\ell$-group with a negative point $a$. Thus, by Hölder's Theorem~\cite{Holder:Axiome}, there is an embedding $h$ of the $\fa$-free reduct of $\A$ into $\R$. Clearly $h$ can be seen as an embedding of $\A$ into $\R_{h(a)}$. As 
$h(a) < 0$, we know that  $\R_{h(a)}$ is isomorphic to $\R_{-1}$ (Lemma~\ref{l:isomorRQ}). Therefore, 
$\lalg{\Lu_{\infty}} \subseteq \IS(\R_{-1})$. Since $\Lu_{\infty}$ is semilinear and $\R_{-1}$ is an $\Lu_{\infty}$-algebra it follows that $\Lu_{\infty}$ is strongly complete with respect to $\R_{-1}$. 
\end{proof}

To conclude our analysis of these systems, we discuss the connection between (infinitary) {\L}ukasiewicz unbound logic and (infinitary) {\L}ukasiewicz logic. The {\L}ukasiewicz logic $\mathrmL$ is a finitary logic axiomatized by four axioms and \emph{modus ponens} and is finitely strongly complete w.r.t.\ the standard \MV-algebra $\mathbfL$ introduced at the beginning of this section. The infinitary {\L}ukasiewicz logic $\mathrmL_\infty$ is its extension by the rule $\rul{Hay}$ introduced in Section~\ref{s:InfAbel}, this logic indeed is not finitary but is strongly complete w.r.t.\ the standard \MV-algebra $\mathbfL$.

As we have seen in the beginning of the section, the algebra $\mathbfL$ is a restriction of the algebra $\alg{LU}$ which in turn is an isomorphic copy of $\R_{-1}$. We use this fact (together with the completeness results for the involved logics), to prove the following results linking (infinitary) {\L}ukasiewicz logic and (infinitary) {\L}ukasiewicz unbound logics via the mapping $\tau$\footnote{A related mapping is presented in \cite[Definition 23]{Metcalfe-Olivetti-Gabbay:SequentCalculiLukasiewicz} as a translation of \L ukasiewicz logic into Abelian logic. The only difference between that translation and the present one is that the former employs an arbitrary fixed propositional variable $q^\perp$ in precisely those places where our construction uses the constant symbol \fa. In fact, more can be said about the connection between $\mathrmL$ and $\Lu$: In \cite[Theorem 20]{Metcalfe-Olivetti-Gabbay:SequentCalculiLukasiewicz}, it is shown that $\mathrmL$ is the $\{\supset,\fa\}$-fragment of $\Lu$, where $\varphi \supset \psi$ is defined as $(\varphi \land \tr) \imp (\psi \lor \fa)$.}
defined recursively on the set of formulas:
\begin{itemize}
\item $\tau(p) = (p \lor \fa) \land \tr$ for all variables $p$.
\item $\tau(\p \imp \x) = (\tau(\p)  \imp \tau(\x)) \land \tr$.
\item $\tau(\p \conj \x) = (\tau(\p)  \conj \tau(\x)) \lor \fa$.
\item $\tau(c) = c$ for $c \in \{\fa,\tr\}$.
\item $\tau(\p \circ \x) = \tau(\p)  \circ \tau(\x)$ for $\circ \in \{\land,\lor\}.$
\end{itemize}

\begin{thm}\label{t:LukLUtranslation}
Let $\Gamma \cup \{\f \}$ be a set of formulas. Then, we have
$$
\Gamma \vdash_{\mathrmL_\infty} \f \qquad\text{ iff }\qquad \tau[\Gamma]\vdash_{\Lu_\infty} \tau(\f) .
$$
Moreover, if $\Gamma$ is finite we have
$$
\Gamma\vdash_{\mathrmL} \f \qquad\text{ iff }\qquad \tau[\Gamma]\vdash_{\Lu} \tau(\f) .$$
\end{thm}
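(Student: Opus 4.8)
The plan is to route both equivalences through the completeness theorems already established on the two sides and to isolate the computational content into a single ``clipping lemma'' relating evaluations in $\alg{LU}$ to evaluations in the standard \MV-algebra $\mathbfL$.

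First I would record the semantic reformulation. The logic $\mathrmL$ is finitely strongly complete and $\mathrmL_\infty$ is strongly complete with respect to $\mathbfL$ (under the \MV truth condition ``value $1$''), while $\Lu$ is finitely strongly complete and $\Lu_\infty$ is strongly complete with respect to $\R_{-1}$ by Theorems~\ref{t:Lu completeness} and~\ref{t:LU completeness}; since $f(x)=x+1$ is an isomorphism $\R_{-1}\cong\alg{LU}$, those two are also (finitely) strongly complete with respect to $\alg{LU}$. Because the designated set of $\R_{-1}$ is $\{x : x\geq 0\}$ and $f(x)=x+1$, the designated set of $\alg{LU}$ is $\{x : x\geq 1\}$; equivalently, $\Delta\vDash_{\alg{LU}}\delta$ iff every $\alg{LU}$-evaluation sending each formula of $\Delta$ to a real $\geq 1$ also sends $\delta$ to a real $\geq 1$. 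Hence $\Gamma\vdash_{\mathrmL_\infty}\f$ iff every $\mathbfL$-evaluation $d$ with $d(\p)=1$ for all $\p\in\Gamma$ has $d(\f)=1$, and $\tau[\Gamma]\vdash_{\Lu_\infty}\tau(\f)$ iff every $\alg{LU}$-evaluation $e$ with $e(\tau(\p))\geq 1$ for all $\p\in\Gamma$ has $e(\tau(\f))\geq 1$; it suffices to prove these two statements equivalent, and likewise with $\Gamma$ restricted to finite sets (harmless, as $\tau[\Gamma]$ is finite whenever $\Gamma$ is).

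The core is the \emph{clipping lemma}, proved by induction on the complexity of a \L ukasiewicz formula $\f$: for every $\alg{LU}$-evaluation $e$ one has $e(\tau(\f))\in[0,1]$ and $e(\tau(\f))=\bar e(\f)$, where $\bar e$ is the $\mathbfL$-evaluation determined by $\bar e(p)=\max\{0,\min\{1,e(p)\}\}$. The base cases are immediate: $e(\tau(p))=\bigl(e(p)\vee 0\bigr)\wedge 1=\max\{0,\min\{1,e(p)\}\}$, and $e(\tau(c))=c^{\alg{LU}}=c^{\mathbfL}$ for $c\in\{\fa,\tr\}$. For the inductive step one verifies that each clause of $\tau$ decorates the corresponding $\alg{LU}$-operation with exactly the truncation needed both to keep the value in $[0,1]$ and to reproduce the appropriate truncated \MV-operation on arguments that already lie in $[0,1]$. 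Concretely, writing $a=\bar e(\p)$, $b=\bar e(\x)$ and using the inductive hypothesis: $\tau(\p\imp\x)=(\tau(\p)\imp\tau(\x))\wedge\tr$ evaluates to $(1-a+b)\wedge 1=\min\{1,1-a+b\}$, the \L ukasiewicz implication of $a,b$ (note $1-a+b\geq 0$ automatically); $\tau(\p\conj\x)=(\tau(\p)\conj\tau(\x))\vee\fa$ evaluates to $(a+b-1)\vee 0=\max\{0,a+b-1\}$, the \L ukasiewicz strong conjunction (note $a+b-1\leq 1$ automatically); and $\tau(\p\circ\x)$ evaluates to $a\circ b$ for $\circ\in\{\land,\lor\}$, since $\min$ and $\max$ preserve $[0,1]$. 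Finally, every $\mathbfL$-evaluation $d$ equals $\bar e$ for the $\alg{LU}$-evaluation $e$ agreeing with $d$ on variables, so $e\mapsto\bar e$ maps the $\alg{LU}$-evaluations \emph{onto} the $\mathbfL$-evaluations.

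Assembling these finishes the proof: because $e(\tau(\cdot))$ always takes values in $[0,1]$, the condition $e(\tau(\p))\geq 1$ is equivalent to $e(\tau(\p))=1$, i.e.\ to $\bar e(\p)=1$, and likewise for $\f$; combined with the surjectivity of $e\mapsto\bar e$ this turns each of the two bracketed statements into the other, and the finitary case is obtained verbatim from the finitary completeness theorems. The only step requiring genuine care---and hence the main obstacle---is the clipping lemma: one must confirm that the occurrences of $\fa$ and $\tr$ inserted by $\tau$ sit precisely where the \MV-operations truncate (at $0$ for $\conj$, at $1$ for $\imp$, and at both ends when lifting variables) and that no subformula value ever escapes $[0,1]$; once this is in place, the apparent clash between the two notions of truth (``$\geq 1$'' for $\Lu_\infty$ versus ``$=1$'' for $\mathrmL_\infty$) disappears exactly because $\tau$ confines all values to $[0,1]$.
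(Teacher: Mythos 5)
Your proposal is correct and follows essentially the same route as the paper: reduce both sides to semantic consequence over $\mathbfL$ and $\alg{LU}\cong\R_{-1}$ via the known completeness theorems, prove by induction the identity $e(\tau(\x))=\bar e(\x)$ for the clipped evaluation $\bar e$, and use that clipped evaluations exhaust all $\mathbfL$-evaluations so that the ``$\geq 1$'' and ``$=1$'' truth conditions coincide on $\tau$-images. The finitary case is handled the same way, just as in the paper.
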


\begin{proof}
Recall that, thanks to the known completeness properties, we can replace the syntactical consequence relations by semantical ones w.r.t.\ algebras $\mathbfL$ and $\alg{\Lu}$ (because it is an isomorphic copy of $\R_{-1}$).

Let us consider any mapping $e$ of propositional variables into reals and
define the mapping $\bar e$ as its restriction to $[0,1]$, i.e., 
$$
\bar e(p) = 
\begin{cases}
     e(p) & \text{ if } e(p) \in [0,1] \\
     1    & \text{ if } e(p) \geq 1 \\
     0    & \text{ if } e(p) \leq 0.
\end{cases}
$$
We denote by $(\bar e)^\mathbfL$ and $e^\alg{Lu}$ the corresponding evaluations. By a simple proof by induction over the complexity of formula $\x$, we obtain:
$$
(\bar e)^\mathbfL(\x) = e^\alg{Lu}(\tau(\x)).
$$
Clearly, it holds for variables and constants by definition of $\bar e$ and $\tau$. 
Now assume $(\bar e)^\mathbfL(\alpha) = e^\alg{Lu}(\tau(\alpha))$ and $(\bar e)^\mathbfL(\beta) = e^\alg{Lu}(\tau(\beta))$ for formulas $\alpha,\beta$. We want to show that $(\bar e)^\mathbfL(\alpha \circ \beta) = e^\alg{Lu}(\tau(\alpha \circ \beta))$ for $\circ \in \{\land,\lor,\conj,\imp\}$. We show this step only for $\circ=\conj$.
We write:
\begin{multline*}  
    e^\alg{Lu}(\tau(\alpha \conj \beta))=e^\alg{Lu}((\tau(\alpha) \conj \tau(\beta)) \lor \fa)=  (e^\alg{Lu}(\tau(\alpha))+e^\alg{Lu}(\tau(\beta))-1) \lor 0= \\ ((\bar e)^\mathbfL(\alpha)+(\bar e)^\mathbfL(\beta)-1) \lor 0=(\bar e)^\mathbfL (\alpha \conj \beta).
\end{multline*}

Now we can prove the left-to-right direction of the first equivalence. Assume that $\Gamma\vDash_{\mathbfL} \f$ and that $e$ is a mapping such that for each $\gamma\in \Gamma$ we have (recall that $\tr^\alg{LU} = 1$)
$$
e^\alg{Lu}(\tau(\gamma)) \geq 1.
$$
As $e^\alg{Lu}(\tau(\gamma)) = (\bar e)^\mathbfL(\gamma)$ we have
$(\bar e)^\mathbfL(\gamma) = 1$ for $\gamma\in\Gamma$ and so, by the assumption, also
$(\bar e)^\mathbfL(\f) = 1$, which entails $e^\alg{\Lu}(\tau(\f)) = 1$.

To prove the converse direction, it suffices to note that if the co-domain of $e$ is a subset of $[0,1]$, then $e = \bar e$. 

The second equivalence can either be proved in exactly the same way or actually follows from the first one and the fact that on finite sets of premises the \L ukasiewicz (unbound) logic coincides with its infinitary variant. 
\end{proof}

The translation provides some information regarding computational complexity. In \L ukasiewicz logic, the problem of deciding the validity of finitary consecutions is known to be coNP-complete \cite{Hanikova:Handbook}. As Theorem~\ref{t:LukLUtranslation} provides a polynomial-time reduction of finitary consecutions from $\mathrmL$ to $\Lu$, it immediately follows that the corresponding problem for $\Lu$ is coNP-hard. The more involved question of whether the set of valid finitary consecutions of $\Lu$ is also in coNP, and thus coNP-complete, has been answered affirmatively in the paper \cite{Hanikova-Jankovec:ComplexityUnboundedRelative}. 

Having established the axiomatization for the logic of $\R_{-1}$, we now turn to the complementary task of characterizing the logics of the other prominent pointed algebras, $\R_0$ and $\R_1$. As we will show, a similar approach allows us to axiomatize these systems in a straightforward manner.
Let us define a new connective $-\f$  as $\f \imp \tr$. By simple checking we prove the following lemma (note that we could prove a more complex variant akin to Lemma~\ref{l:similar extensions} to demonstrate that seemingly simpler rules would not do).

\begin{lemma}
Let $\A \in \lalg{\pAb}$ and $\A \neq \oa$. Then:
\begin{itemize}
\item $\fa^\A < 0$ iff the rule $\fa \lor \f \wdash \f$ is valid in $\A$.
\item $\fa^\A \leq 0$ iff the axiom $\wdash -\fa$ is valid in $\A$.
\item $\fa^\A = 0$ iff the axiom $\wdash \fa\wedge -\fa$ is valid in $\A$.
\item $\fa^\A \neq 0$ iff the rule $(\fa\wedge -\fa) \lor \f \wdash \f$ is valid in $\A$.
\item $\fa^\A \geq 0$ iff the axiom $\wdash \fa$ is valid in $\A$.
\item $\fa^\A > 0$ iff the rule $-\fa \lor \f \wdash \f$ is valid in $\A$.
\end{itemize}
\end{lemma}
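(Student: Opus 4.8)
The plan is to verify each of the six bi-implications directly by unwinding the definitions of the connective $-\f = \f \imp \tr$ and of validity of rules/axioms in a linearly ordered pointed Abelian $\ell$-group. Recall that in such an algebra $\A$, the evaluation of $-\f$ at $a$ is $0 - a = -a$, so $e(-\fa) \geq 0$ iff $\fa^\A \leq 0$, and $e(\fa \wedge -\fa) \geq 0$ iff both $\fa^\A \geq 0$ and $\fa^\A \leq 0$, i.e.\ $\fa^\A = 0$; these are the easy arithmetic facts that feed the axiom cases. For the axiom cases ($\wdash -\fa$, $\wdash \fa\wedge-\fa$, $\wdash\fa$) there is essentially nothing to do: an axiom $\wdash \chi$ is valid in $\A$ iff $e(\chi) \geq 0$ for all evaluations $e$, and since $\chi$ here is a variable-free formula, this is just the single numerical condition on $\fa^\A$ computed above.

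For the three rule cases ($\fa\lor\f\wdash\f$, $(\fa\wedge-\fa)\lor\f\wdash\f$, $-\fa\lor\f\wdash\f$), the first one is already handled: it is exactly condition~3 of Lemma~\ref{l:similar extensions} (taking into account that a linearly ordered algebra is in particular one to which that lemma applies), so $\fa^\A < 0$ iff $\fa\lor\f\wdash\f$ is valid. The remaining two have the common shape $\theta \lor \f \wdash \f$ where $\theta$ is variable-free with value $t := e(\theta)$ not depending on $e$. I would argue the general pattern: in a chain, $\theta\lor\f\wdash\f$ is valid iff $t < 0$. Indeed, if $t \geq 0$, set $e(\f) = t$; then $e(\theta\lor\f) = t \geq 0$ but $e(\f) = t$ need not be positive when... more carefully, pick $e(\f) = -1$ if the algebra is nontrivial (so such an element exists, using $\A\neq\oa$ and linearity): then $e(\theta\lor\f) = t \vee (-1)$, which is $\geq 0$ when $t\geq 0$, yet $e(\f) = -1 < 0$, so the rule fails. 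Conversely, if $t < 0$, then $e(\theta\lor\f) \geq 0$ forces, by linearity, $e(\f)\geq 0$ since $\theta$ contributes the fixed negative value $t$. Applying this with $\theta = \fa\wedge-\fa$ (value $\fa^\A \wedge -\fa^\A$, which is $<0$ iff $\fa^\A \neq 0$) gives the fourth bullet, and with $\theta = -\fa$ (value $-\fa^\A$, which is $<0$ iff $\fa^\A > 0$) gives the sixth.

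So the structure of the proof is: state the values $e(-\fa) = -\fa^\A$, $e(\fa\wedge-\fa) = \fa^\A\wedge(-\fa^\A)$; dispose of the three axiom bullets by the trivial observation that a variable-free axiom $\wdash\chi$ holds iff $e(\chi)\geq0$; cite Lemma~\ref{l:similar extensions} for the first rule bullet; and prove the one-line general claim ``for variable-free $\theta$ with value $t$ and nontrivial chain $\A$: $\theta\lor\f\wdash\f$ is valid iff $t<0$'' and instantiate it twice. I do not expect any genuine obstacle here — this really is ``simple checking'' as the authors say; the only mild care needed is the use of $\A\neq\oa$ together with linearity to guarantee a strictly negative element exists for the failure direction of the rule cases (in the trivial algebra every rule holds vacuously, which is why that hypothesis is present), and remembering that the parenthetical remark in the statement points at exactly why the naive simpler rules — e.g.\ $\fa\wedge-\fa\wdash\f$ without the $\lor$-form, or the plain axiom versions of the strict inequalities — fail, by an $\R_a\times\R_b$-style counterexample as in Example~\ref{e:semilinearity of superabelian logics}, though producing such a counterexample is not needed for the lemma itself.
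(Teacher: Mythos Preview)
Your proposal is correct and is exactly the kind of ``simple checking'' the paper alludes to without spelling out; the paper gives no detailed proof here, so your argument \emph{is} the missing content. One cosmetic remark: writing ``pick $e(\f)=-1$'' presumes a specific element that need not exist in an arbitrary chain---you mean ``pick any strictly negative element'', whose existence you correctly justify from $\A\neq\oa$ and linearity---and your general claim about variable-free $\theta$ already subsumes the first rule bullet, so the separate appeal to Lemma~\ref{l:similar extensions} is redundant (though not wrong).
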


\begin{thm}
Let $\alg{A}$ be any non-trivial $\rAb$-algebra. Then the extension of $\pAb$ by the rule/axiom mentioned in a given row of Table~\ref{tab:compl} is finitely strongly complete w.r.t.\ the sets of algebras mentioned in the corresponding columns Set 1--3. Its extension by the rule $\rul{Arch}^\vee$ is strongly complete w.r.t.\ the set of algebras mentioned in the corresponding column Set 3.
\end{thm}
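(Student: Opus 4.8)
The plan is to reduce everything to a single master completeness statement, obtained by combining the preceding lemma with the finitary-semilinear-completeness machinery (Theorem~\ref{t:semilinear finitary completeness}, Corollary~\ref{c:ultra-decomposition of semiliar logics}) and Theorem~\ref{t:pAb lots R}. Fix a non-trivial $\rAb$-algebra $\alg{A}$ (so its $\fa$-free reduct is an Archimedean, hence linearly ordered once we pass to the subdirectly irreducible case, $\ell$-group). For each of the six rows of Table~\ref{tab:compl}, call $\logic{L}$ the extension of $\pAb$ by the displayed rule/axiom, and let $\Phi\subseteq\R$ be the corresponding sign-condition on $\fa$ (one of $<0$, $\leq 0$, $=0$, $\neq 0$, $\geq 0$, $>0$). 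The first step is to observe that $\logic{L}$ is semilinear: each displayed rule is either an axiom (whose $\vee$-form is trivially derivable) or already in $\vee$-form or of the shape $\psi\vee\f\wdash\f$, so Corollary~\ref{c:semilinearity_criterium} applies exactly as in the proof of the lemma preceding Definition of $\Lu$. Hence by Theorem~\ref{t:semilinear finitary completeness} it suffices to analyse $\lalg{\logic{L}}$.

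The second step is the decomposition. By Theorem~\ref{t:pAb lots R}, $\pAb$ is finitely strongly complete with respect to $\po\alg{A}=\{\alg{A}_a\mid a\in A\}$, and we partition this class according to the sign of $a$: into $\po_{\Phi}\alg{A}=\{\alg{A}_a\mid a\in A,\ \text{$a$ satisfies }\Phi\}$ and its complement $\po_{\overline\Phi}\alg{A}$. Applying Corollary~\ref{c:ultra-decomposition of semiliar logics} to $\pAb$ with these finitely many pieces (together with $\I(\oa)$) gives
\[
\lalg{\pAb}\subseteq \I(\oa)\cup \ISPU(\po_{\Phi}\alg{A})\cup \ISPU(\po_{\overline\Phi}\alg{A}),
\]
and since $\lalg{\logic{L}}\subseteq\lalg{\pAb}$, the same inclusion holds for $\logic{L}$. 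Now comes the key point: the sign-conditions $\fa\geq 0$, $\fa\leq 0$, $\fa=0$ are expressed by the \emph{equations} $\fa\vee\tr=\tr$, $\fa\wedge\tr=\fa$, $\fa=\tr\to\fa=0$ (in the group notation), hence are preserved by $\S$, $\P$, and $\PU$; consequently the complementary conditions $\fa<0$, $\fa>0$, $\fa\neq 0$ are preserved under ultraproducts of algebras all satisfying them. Therefore every non-trivial member of $\ISPU(\po_{\overline\Phi}\alg{A})$ violates $\Phi$, and by the preceding lemma the corresponding rule/axiom fails in it; so that whole summand lies outside $\lalg{\logic{L}}$. We are left with
\[
\lalg{\logic{L}}\subseteq \I(\oa)\cup \ISPU(\po_{\Phi}\alg{A}),
\]
and Corollary~\ref{c:ultra-decomposition of semiliar logics} (used now in the other direction) yields that $\logic{L}$ is finitely strongly complete with respect to $\po_{\Phi}\alg{A}$, which is the Set~3 column. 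For the infinitary row, add $\rul{Arch}^\vee$: $\logic{L}+\rul{Arch}^\vee$ is still semilinear by Corollary~\ref{c:semilinearity_criterium}, its linear algebras are pointed Archimedean $\ell$-groups with $\fa$ satisfying $\Phi$, so by Hölder's Theorem each embeds into some $\R_{r}$ with $r$ satisfying $\Phi$; then Theorem~\ref{t:semilinear completeness} gives strong completeness with respect to $\{\R_r\mid r\text{ satisfies }\Phi\}$, and Lemma~\ref{l:isomorRQ} collapses this to the single algebra listed in Set~3.

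The remaining work is to identify, for each row, the concrete Sets~1 and~2 with $\po_{\Phi}\R$ (or $\po_{\Phi}\Q$, or $\po_{\Phi}\alg{A}$ for the given $\alg{A}$) up to $\ISPU$-equivalence: for the sharp conditions $=0$ the class is $\{\R_0\}$ (resp.\ $\{\Q_0\}$), for the half-line conditions $\leq 0,\geq 0$ it is $\{\R_{-1},\R_0\}$ resp.\ $\{\R_0,\R_1\}$ by Lemma~\ref{l:isomorRQ}, for the open conditions $<0,>0$ it is $\{\R_{-1}\}$ resp.\ $\{\R_1\}$, and for $\neq 0$ it is $\{\R_{-1},\R_1\}$; the fact that these small sets generate the same quasivariety as all of $\po_{\Phi}\R$ is exactly Lemma~\ref{l:isomorRQ} applied positive- and negative-side separately. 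The main obstacle is purely bookkeeping: making sure that in the $\neq 0$ and $\leq 0$ (non-open) cases the relevant class is closed under the operators needed — here one must be slightly careful that $\ISPU$ of $\{\R_{-1},\R_1\}$ splits as $\ISPU(\R_{-1})\cup\ISPU(\R_1)$ (Corollary~\ref{c:ultra-decomposition of semiliar logics}, via Bergman), and that an ultrapower of algebras with negative point still has negative point, which is the preservation remark above; once that is in place, no genuine difficulty remains.
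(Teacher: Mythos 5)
Your proposal is correct and follows essentially the same route the paper takes (it leaves the proof implicit, but the intended argument mirrors Theorems~\ref{t:Lu completeness} and~\ref{t:LU completeness}: semilinearity via Corollary~\ref{c:semilinearity_criterium}, the decomposition from Theorem~\ref{t:pAb lots R} and Corollary~\ref{c:ultra-decomposition of semiliar logics}, discarding the summands whose sign condition contradicts the lemma, Lemma~\ref{l:isomorRQ} for the $\Q$/$\R$ columns, and Hölder plus Theorem~\ref{t:semilinear completeness} for the $\rul{Arch}^\vee$ part). Only two cosmetic slips: the class $\{\A_a \mid a \text{ satisfies } \Phi\}$ is the Set~1 (not Set~3) column, and the preservation of the strict conditions $\fa<0$, $\fa>0$, $\fa\neq 0$ under subalgebras and ultraproducts should be justified directly (they are quantifier-free first-order conditions on the constant, so Łoś and closure of subalgebras under constants suffice) rather than presented as a consequence of the equational cases.
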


\begin{table}
\begin{center}
\begin{tabular}{c|c@{\quad \ }c@{\quad \ }c}
Extension of $\pAb$ by & Set 1 & Set 2 & Set 3 \\\hline
$\emptyset$ & $\{\A_a \mid a\in A\}$ & $\{\Q_{-1}, \Q_0, \Q_1\}$ & $\{\R_{-1}, \R_0, \R_1\}$\\
$\wdash \fa$ & $\{\A_a \mid a \geq 0\}$ & $\{\Q_{0}, \Q_1\}$ & $\{\R_{0}, \R_1\}$\\
$\wdash -\fa$ & $\{\A_a \mid a \leq 0\}$ & $\{\Q_{-1}, \Q_0\}$ & $\{\R_{-1}, \R_0\}$\\
$(\fa\wedge -\fa)\lor \f \wdash \f$ & $\{\A_a \mid a \neq 0\}$ & $\{\Q_{-1}, \Q_{1}\}$ & $\{\R_{-1}, \R_{1}\}$\\
$\fa \lor \f \wdash \f$ & $\{\A_a \mid a < 0\}$ & $\{\Q_{-1}\}$ & $\{\R_{-1}\}$\\
$-\fa \lor \f \wdash \f$ & $\{\A_a \mid a > 0\}$ & $\{\Q_{1}\}$ & $\{\R_{1}\}$\\
$\wdash \fa\wedge-\fa$ & $\{\A_0\} $ & $\{\Q_0\}$ & $\{\R_0\}$
\end{tabular}
\end{center}
\caption{Completeness properties of prominent extensions of $\pAb$}\label{tab:compl}
\end{table}

As mentioned before, the expansion of $\pAb$ by $\wdash \fa\wedge-\fa$ can be seen as the Abelian logic itself (formally speaking, these two logics are termwise equivalent by setting $\fa = \tr$).

Let us conclude this section by showing a natural and simple translation between $\Lu$ and the expansion of $\pAb$ by $-\fa \lor \f \wdash \f$, denoted here by $\Lu^*$.

\begin{thm}\label{t:tau}
Let us consider the mapping $\tau\colon\Fm{\lang{L_\pAb}} \longrightarrow \Fm{\lang{L_\pAb}}$ defined recursively as follows:
\begin{itemize}
\item $\tau\colon \fa \mapsto -\fa$.
\item $\tau\colon \tr \mapsto \tr$.
\item $\tau\colon x \mapsto x $ for each variable $x$.
\item $\tau\colon (\p \circ \xi) \mapsto (\tau(\p) \circ \tau(\xi))$ for all binary connectives $\circ \in \lang{L_\pAb}$.
\end{itemize}
For any consecution $\Gamma \wdash \f$ and for each Abelian $\ell$-group $\A$, and $a \in A$ we have
    
$$\Gamma\vDash_{\A_a} \f \qquad \text{iff} \qquad \tau[\Gamma]\vDash_{\A_{-a}} \tau(\f).$$

In particular, for any consecution $\Gamma \wdash \f$ we have
\begin{enumerate}    
\item $\Gamma\vdash_{\Lu} \f$ iff $\tau[\Gamma]\vdash_{\Lu^*} \tau(\f) $.
\item $\Gamma\vdash_{\Lu^*} \f$ iff $\tau[\Gamma]\vdash_{\Lu} \tau(\f) $.
\end{enumerate}
\end{thm}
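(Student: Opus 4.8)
The plan is to exploit the fact that $\Lu$ and $\Lu^*$ are both finitely strongly complete with respect to single algebras, namely $\R_{-1}$ and $\R_1$ respectively (from Theorem~\ref{t:Lu completeness} and the table above), and that $\tau$ precisely implements the isomorphism between these two matrices at the level of formulas. First I would observe that $\R_1$ is isomorphic to $\R_{-1}$ via the map $h\colon x \mapsto -x$ (this is exactly Lemma~\ref{l:isomorRQ}, since $(-1)\cdot 1 > 0$), and crucially that $h$ is an isomorphism of the $\fa$-free $\ell$-group reducts that sends the point $1$ to the point $-1$; in other words $h\colon \R_1 \to \R_{-1}$ is an isomorphism of pointed $\ell$-groups. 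The key semantic lemma is then: for every $\R$-evaluation $e$ and every formula $\x$, one has $e^{\R_1}(\tau(\x)) = h^{-1}\bigl(e'^{\R_{-1}}(\x)\bigr)$ for a suitable companion evaluation $e'$, or more cleanly, $h\bigl(e^{\R_1}(\x)\bigr) = \bar e^{\R_{-1}}(\tau(\x))$ where $\bar e = h \circ e$ on variables. I would prove this by induction on the complexity of $\x$: the variable and $\tr$ cases are immediate from the definition of $\tau$ and $\bar e$; the case $\tau(\fa) = -\fa$ works because $h(\fa^{\R_1}) = h(1) = -1 = \fa^{\R_{-1}} = (-\fa)^{\R_{-1}}$ evaluated trivially (note $-\fa$ means $\fa \imp \tr$, whose value in $\R_{-1}$ is $0 - (-1) = 1 \neq -1$, so one must be careful here); and the binary connective cases follow because $h$ is an $\ell$-group isomorphism commuting with $+$, $\lor$, $\land$ and hence with $\imp$ and $\conj$.

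The subtle point I flag as the main obstacle is exactly the constant case: $\tau$ sends $\fa$ to $-\fa = \fa \imp \tr$, and in $\R_{-1}$ the value of $\fa \imp \tr$ is $0 - (-1) = 1$, whereas we want this to match $h(\fa^{\R_1}) = h(1) = -1$. This suggests the roles are reversed: the correct statement is that $h\colon \R_{-1} \to \R_1$ is the isomorphism $x \mapsto -x$, sending the point $-1$ to $1$, and then $\tau$ translates formulas to be evaluated in the \emph{other} matrix. Concretely I expect the induction to establish, for any $\R$-valued $e$, the identity $e^{\R_1}(\x) = -\bigl(e^{\R_{-1}}(\tau(\x))\bigr)$ where on the right one uses the evaluation $p \mapsto -e(p)$; then $e^{\R_1}(\x) \geq 0$ iff $e^{\R_{-1}}(\tau(\x)) \leq 0$ iff $(-\fa \lor \tau(\x)) \geq 0$ fails \dots. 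I would need to chase the signs carefully so that the truth conditions ($e(\psi) \geq 0$ for premises) line up: since $\Lu$ is the logic of $\R_{-1}$ and $\Lu^*$ that of $\R_1$, and the truth set of both is the non-negative cone, the sign-flip $h$ does \emph{not} preserve the cone, so one genuinely needs the syntactic $\tau$ to compensate. Getting the statement of the inductive invariant exactly right — reconciling "$\tau$ swaps $\fa$ and $-\fa$" with "$h$ swaps the cones" — is the one place where a wrong choice propagates everywhere, so I would pin it down first on a tiny example (say $\x = \fa$ and $\x = \fa \imp \fa$) before writing the general induction.

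With the semantic correspondence in hand, both equivalences in the theorem follow uniformly. For direction (1): if $\Gamma \vdash_{\Lu} \f$ then by completeness $\Gamma \vDash_{\R_{-1}} \f$; given an $\R$-evaluation $e$ with $e^{\R_1}(\tau(\gamma)) \geq 0$ for all $\gamma \in \Gamma$, the correspondence lemma converts these into an evaluation $e'$ on $\R_{-1}$ with $e'^{\R_{-1}}(\gamma) \geq 0$, whence $e'^{\R_{-1}}(\f) \geq 0$, and converting back gives $e^{\R_1}(\tau(\f)) \geq 0$, i.e.\ $\tau[\Gamma] \vDash_{\R_1} \tau(\f)$, i.e.\ $\tau[\Gamma] \vdash_{\Lu^*} \tau(\f)$. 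The converse of (1) and both halves of (2) are entirely symmetric, using that $\tau$ is (up to provable equivalence) an involution — indeed $\tau(\tau(\fa)) = \tau(-\fa) = \tau(\fa \imp \tr) = (-\fa) \imp \tr$, which is provably equal to $\fa$ in $\pAb$ since $-(-\fa) = \fa$ holds in every $\ell$-group — so applying $\tau$ twice returns each formula to one provably equivalent to the original. I would state this involutivity (modulo $\dashv\vdash_{\pAb}$) as a preliminary observation and then note that it makes (2) literally the image of (1) under $\tau$, so only one direction of one equivalence carries real content.
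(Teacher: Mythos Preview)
Your overall architecture—reduce both directions to a semantic correspondence and then invoke completeness—is exactly what the paper does. But the semantic correspondence you propose rests on a false premise, and this is what generates all the sign-chasing you flag as ``the main obstacle''.

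The claim that $\R_{-1}$ and $\R_1$ are isomorphic via $h(x) = -x$ is wrong on two counts. First, $(-1)\cdot 1 = -1 < 0$, so Lemma~\ref{l:isomorRQ} says precisely that these two pointed $\ell$-groups are \emph{not} isomorphic. Second, and more fundamentally, $x \mapsto -x$ is a group automorphism of $\R$ but it reverses the lattice order: $-(a \lor b) = (-a) \land (-b)$. Hence it is not an $\ell$-group homomorphism at all, and it certainly does not ``commute with $\lor$, $\land$'' as you need in the inductive step for binary connectives. The mismatch you notice in the constant case (where you compute $e^{\R_{-1}}(-\fa) = 1 \neq -1$) is a symptom of this; it cannot be repaired by rearranging the roles of the two algebras, because no order-preserving bijection sends the point $-1$ to the point $1$.

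The paper sidesteps the whole issue by never changing the underlying map. Given an Abelian $\ell$-group $\A$ and $a \in A$, take any $\A_a$-evaluation $e_1$ and let $e_2$ be the $\A_{-a}$-evaluation that agrees with $e_1$ on all variables (same function into the same carrier set; only the interpretation of $\fa$ differs). Then $e_2(\tau(\fa)) = e_2(-\fa) = -(-a) = a = e_1(\fa)$, and for variables $e_2(\tau(x)) = e_2(x) = e_1(x)$; a trivial induction gives $e_1(\psi) = e_2(\tau(\psi))$ for every formula $\psi$, with no sign flips and no order reversal. This yields $\Gamma \vDash_{\A_a} \f$ iff $\tau[\Gamma] \vDash_{\A_{-a}} \tau(\f)$ for arbitrary $\A$ and $a$, and finite strong completeness of $\Lu$ and $\Lu^*$ with respect to $\{\A_a \mid a<0\}$ and $\{\A_a \mid a>0\}$ (together with the finitarity of both logics, needed for infinite $\Gamma$) finishes both equivalences at once.

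Your observation that $\tau$ is an involution up to $\pAb$-provable equivalence is correct and would indeed let you derive (2) from (1); but with the paper's symmetric semantic lemma this shortcut is not even needed.
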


\begin{proof}

Consider an arbitrary $\A_a$-evaluation $e_1\colon\Fm{L_{\pAb}} \longrightarrow \A_a$. Clearly, $e_1$ extends the evaluation $e_0\colon\Fm{L_{\Ab}} \longrightarrow \A$. Clearly there is a unique evaluation $e_2\colon\Fm{L_{\pAb}} \longrightarrow \A_{-a}$ extending $e_0$ such that $e_1(\fa)=-e_2(\fa)=e_2(-\fa)$ and $e_1(x)=e_2(x)$ for all variables $x$. By induction one can show that $e_1(\p) =e_2(\tau(\p) )$ for each formula $\p \in \Fm{L_{\pAb}}$.
Therefore, $e_1(\p) \geq 0$ iff $e_2(\tau(\p) ) \geq 0$. From this, one can easily derive the statement of this theorem.
\end{proof}

Clearly, the same translation would link the infinitary versions of these two logics (i.e., their extensions of $\rul{Arch}^\vee$); the expansions of $\pAb$ by (1) by axiom $\fa$ and (2) axiom $-\fa$; and their extensions of $\rul{Arch}^\vee$.

\section{Future Work}\label{s:Conclusions}

In this section, we outline possible directions for future research on the topics covered in this paper.

\begin{itemize}
\item  Study of the lattice of infinitary extensions of Abelian logic: While the present paper provides an infinitary rule for the extension corresponding to the generalized quasivariety generated by $\R$ and establishes the existence of $2^{2^\omega}$ distinct infinitary extensions, it is clear that we have only started the topic and the structure of the full lattice of infinitary extensions of \Ab\ remains largely unexplored.
One of the goals for future research is a systematic classification of this lattice. In particular, a major open challenge is explicitly axiomatizing other fundamental logics, namely the logics of \Z\ and \Q. 

\item Infinitary logics of distinct total orders on $\Z^n$: It is a well-established result \cite{Clay-Rolfsen:OrderedGroups} that the space of total orders on $\Z^n$ is homeomorphic to a Cantor space for $n>1$. A natural open problem is to determine whether each distinct ordering within this space generates a unique infinitary logic.

 \item Study of finitary extensions of pointed Abelian logic: In the case of \pAb, one should first focus on {\em finitary} extensions, as they are plentiful. In this paper we have axiomatized several prominent logics of pointed Abelian $\ell$-groups, but a systematic study remains an interesting topic for the future. On this matter, it is relevant to mention that the relation between \L ukasiewicz logic and the \L ukasiewicz unbound logic proved in Theorem~\ref{t:LukLUtranslation} also holds between the $n$-valued \L ukasiewicz logic and the logic of $\Z_{n-1}$ (and analogously between other axiomatic extensions of \L ukasiewicz logic and properly chosen extensions of $\Lu$). Algebraically speaking, the lattice of subvarieties of \L ukasiewicz logic (described by Komori in~\cite{Komori:SuperLukasiewiczPropositional}) can be embedded into the lattice of subvarieties of $\Lu$ (first shown in \cite{Young:Varieties_of_pointed_Abelian_l-groups}). The submitted paper \cite{Jankovec:SubvarietiesPointedAbelianLGroups} describes all quasivarieties generated by linearly ordered pointed Abelian $\ell$-groups. Nevertheless, the case of non-semilinear finitary extensions of pointed Abelian logic remains unexplored.\looseness-1

\item Semilinearity and completeness: Another area for further investigation is to determine whether certain extensions of Abelian logic, specifically the logic $\Ab+\rul{IDC}^\vee+\rul{Arch}^\vee$, are semilinear, which would imply strong completeness with respect to $\Z$. We have conjectured that this logic may be strongly complete with respect to $\Z$, but this remains to be proven.

\item Study of other superabelian logics: In this paper we have focused on \L ukasiewicz unbound logic and infinitary extensions of Abelian logic, but the framework from Section \ref{s:Prelim} applies to any superabelian logic, e.g.\ certain modal logics or the expansions of $\Ab$ with connectives corresponding to additional arithmetical operations on real numbers, such as multiplication or division. 
\end{itemize}

\vspace{-1ex}
\section*{Acknowledgements}
The authors thank the editor and the referees for their careful reading and comments that contributed substantially to the improvement of the article.

\vspace{-1ex}
\section*{Funding}
All authors were partly supported by European Union's Marie Sklodowska--Curie grant no.\ 101007627 (MOSAIC project). The first two authors (Cintula and Jankovec) were also supported by a grant from the Programme Johannes Amos Comenius under the Ministry of Education, Youth and Sports of the Czech Republic, CZ.02.01.01/00/23\_025/0008711.
Finally, the second author (Jankovec) was also supported by the project SVV-2025-260837.

\bibliographystyle{plainurl}
\bibliography{mfl}

\end{document}